\documentclass[10pt,leqno]{amsart}

\usepackage{amsmath}
\usepackage{amsfonts}
\usepackage{amssymb}
\usepackage{mathtools}
\usepackage{graphicx}
\usepackage{color}
\usepackage{hyperref}
\usepackage{xcolor}
\newtheorem{theorem}{Theorem}[section]
\newtheorem*{theorem*}{Theorem}
\newtheorem{lemma}[theorem]{Lemma}
\newtheorem{corollary}[theorem]{Corollary}
\newtheorem{proposition}[theorem]{Proposition}
\newtheorem{conjecture}[theorem]{Conjecture}
\newtheorem{maintheorem}{Theorem}[]

\theoremstyle{definition}
\newtheorem{definition}[theorem]{Definition}
\newtheorem{example}[theorem]{Example}

\newtheorem{problem}{Problem}

\newtheorem{remark}[theorem]{Remark}

\newtheorem{question}[problem]{Question}

\newcommand{\od}{\odot}

\newcommand{\R}{\mathbb{R}}

\def\Ric{\text{Ric}}

\def\C{\mathbb{C}}
\def\R{\mathbb{R}}

\def\CP{{\mathbb{CP}}}

\def\id{\operatorname{id}}
\def\Ric{\operatorname{Ric}}

\def\tr{\operatorname{tr}}

\numberwithin{equation}{section}
\makeatletter
\newcommand*\owedge{\mathpalette\@owedge\relax}
\newcommand*\@owedge[1]{%
  \mathbin{%
    \ooalign{%
      $#1\m@th\bigcirc$\cr
      \hidewidth$#1\m@th\wedge$\hidewidth\cr
    }%
  }%
}
\makeatother

\begin{document}

\title[Characterizing $\mathbb{CP}^n$ via the Calabi Curvature Operator]{A Characterization of Complex Projective Space via the Calabi Curvature Operator}

\author[El-Hasan]{Hasan M. El-Hasan}
\address{University of California, Santa Barbara, South Hall, CA, 93106}
\email{elhasan@ucsb.edu}
\author[Li]{Xiaolong Li}
\address{Department of Mathematics and Statistics, Auburn University, Auburn, AL, 36849, USA}
\email{xil0005@auburn.edu}
\author[Nienhaus]{Jan Nienhaus}
\address{Department of Mathematics, KU Leuven, Celestijnenlaan 200A, Leuven B-3000, Belgium}
\email{jan.nienhaus@kuleuven.be}
\author[Petersen]{Peter Petersen}
\address{University of California, Los Angeles, 520 Portola Plaza, CA, 90095}
\email{petersen@math.ucla.edu}
\author[Stanfield]{James Stanfield}
 \address{University of Wollongong, Northfields Avenue, Wollongong, NSW 2500, Australia}
 \email{jstanfield@uow.edu.au} 
\author[Wink]{Matthias Wink}
\address{University of California, Santa Barbara, South Hall, CA, 93106}
\email{wink@math.ucsb.edu}


\subjclass[2020]{32Q10, 32Q15, 32Q20, 53C21, 53C55}

\keywords{K\"ahler-Einstein manifolds, Calabi curvature operator, Bochner tensor, Bochner technique}

\begin{abstract}
    We prove a Tachibana-type result for K\"ahler-Einstein manifolds isolating complex projective space based on its Calabi curvature operator. The proof uses a new Bochner formula expressing the Lichnerowicz curvature term via the Calabi curvature operator. This resolves one of the problems \cite{Bochner} posed at the AIM workshop ``The Bochner Technique'' (May 2026). 
\end{abstract}

\maketitle

\section{Introduction}

The main objective of this paper is to prove a rigidity theorem characterizing complex projective space among closed K\"ahler-Einstein manifolds using the Calabi curvature operator. To define the Calabi curvature operator of a K\"ahler manifold, consider the bundle $S^{2,0}\subset T^{1,0} \otimes T^{1,0}$ of symmetric tensors of type (2,0).

The Calabi operator is defined by noting that by the K\"ahler symmetries the definition $$\langle C(Z_\alpha\otimes Z_\beta), \bar Z_\gamma\otimes \bar Z_\delta\rangle=R_{\alpha \bar \gamma \beta \bar \delta}$$
 induces a self-adjoint map $C \colon S^{2,0}\to S^{2,0}$. The Calabi curvature operator was introduced by Calabi-Vesentini in \cite{CV} to study the deformation theory of compact locally symmetric K\"ahler manifolds, and subsequently studied in \cite{BCV,OT,Sitaramayya}. 
 More recently, partial positivity of this operator has been used to establish vanishing of Hodge numbers and rigidity results \cite{BNPSWCalabiCurvatureOperator}, quadratic Weitzenb\"ock-Bochner-Kodaira formulas \cite{WangYangWeitzenbockBochnerKodaira}, and Laplacian comparison estimates \cite{FanXiongYangLaplacianComparison}. On complex projective space with the Fubini-Study metric, $C$ is a multiple of the identity operator.

Our main result is the following Tachibana-type theorem for the Calabi curvature operator.
\begin{maintheorem}
\label{thm main}\label{maintheorem}
  Let $(M,g)$ be a closed K\"ahler-Einstein manifold of complex dimension $n\geq 2$ with $\Ric(g)=\lambda g$ for some $\lambda \geq 0.$ If $C - \frac{\lambda}{n+1} \id$ is $\frac{2(n+1)}{5}$-nonnegative, then $(M,g)$ is isometric to complex projective space with the Fubini-Study metric or flat.
\end{maintheorem}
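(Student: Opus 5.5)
We follow Tachibana's strategy, applied to the Bochner tensor rather than the full curvature tensor. Since $g$ is Kähler-Einstein, the second Bianchi identity gives that $R$ is divergence free, $\delta R = 0$. The Bochner tensor $B$ (the Kähler analogue of the Weyl tensor) is then also harmonic, viewed as a section of the bundle of algebraic Kähler curvature tensors. The Bochner-Weitzenböck formula gives
\[
\Delta \tfrac12 |B|^2 = |\nabla B|^2 + \tfrac12\langle \mathrm{Ric}(B), B\rangle,
\]
where $\mathrm{Ric}(B)$ is the Lichnerowicz curvature term. Integrating over the closed manifold $M$, it suffices to show $\langle \mathrm{Ric}(B),B\rangle \ge 0$ pointwise under the hypothesis. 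Then $\nabla B=0$ and $\langle \mathrm{Ric}(B),B\rangle=0$, and a rigidity analysis of the equality case should force $B=0$ (or $R\equiv 0$ when $\lambda=0$). A Bochner-flat Kähler-Einstein manifold has constant holomorphic sectional curvature. So if $\lambda>0$ it is a quotient of $\CP^n$ with the Fubini-Study metric, and simple connectivity of closed Fano manifolds gives $\CP^n$ itself. If $\lambda=0$ it is flat.

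The key step is the new Bochner formula mentioned in the abstract: express $\langle \mathrm{Ric}(B),B\rangle$ through the Calabi operator. The plan is to write $R = B + \frac{\lambda}{n+1}R_{\CP}$ in the Einstein case, where $R_{\CP}$ is normalized as the curvature of Fubini-Study with Ricci $=(n+1)$, so that $C_{R_{\CP}}=\id$. Then $C - \frac{\lambda}{n+1}\id = C_B$, and $C_B$ is a trace-free self-adjoint operator on $S^{2,0}$. Working in complex frames, view $B$ as a map $S^{2,0}\to S^{2,0}$, i.e.\ an element of $\mathrm{Sym}(S^{2,0})$, invariant under the relevant $U(n)$-structure. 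Then compute
\[
\langle \mathrm{Ric}(B),B\rangle = c_1 \frac{\lambda}{n+1}|B|^2 + c_2\sum_{a} \mu_a\, |(\text{component of } B \text{ along } e_a)|^2-\text{type expression},
\]
with $\mu_1\le\dots\le\mu_N$ the eigenvalues of $C_B$, $N=\dim S^{2,0}=n(n+1)/2$. The precise aim is to show that the curvature term equals, up to a positive constant, $\sum_{a,b} (\mu_a+\mu_b+\text{const}\cdot\lambda)\,|\langle B(\cdot), \ldots\rangle|^2$, which is analogous to the Riemannian identity $\langle \mathrm{Ric}(S),S\rangle=\sum_\alpha \lambda_\alpha |\mathfrak{R}(\Xi_\alpha)S|^2$. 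Here the role of $\mathfrak{so}(n)$ is played by $S^{2,0}$ acting on Kähler tensors via the map $T^{1,0}\to T^{0,1}$ induced by $Z_\alpha\odot Z_\beta$. The $\lambda\ge0$ contributions then appear with a good sign.

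The main obstacle is the partial-nonnegativity estimate. We must show that for $T^{\mathbb C}:=\sum_a \mu_a |S_a B|^2$, with weights $w_a=|S_aB|^2$, the weights satisfy $w_a \le \frac{5}{2(n+1)} \sum_b w_b$ (or the analogous Tachibana-type weight bound). Then $k$-nonnegativity with $k=\frac{2(n+1)}{5}$ of $C_B$ implies $\sum \mu_a w_a\ge 0$, by the standard lemma: if $0\le w_a\le \frac1k\sum w_b$ and $\mu$ is $k$-nonnegative, then $\sum\mu_aw_a\ge0$. I would prove the weight bound by normalizing $S_a$ as a unit vector, using the $U(n)$-invariance to diagonalize it as $\sum \nu_i Z_i\otimes Z_i$, and bounding $|S_aB|^2\le$ (constant)$\,|B|^2$ through the algebraic Bianchi identity and the trace-free conditions on $B$. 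Summing the weights is then a Casimir-type computation giving $\sum_a w_a = c\,|B|^2$. The constant $\frac{5}{2(n+1)}$ should come out of the ratio of these two computations, with the maximum attained for rank-one $S_a$. Equality analysis then forces $B=0$ once $\lambda>0$, or via $\nabla R=0$ and holonomy considerations when $\lambda=0$.
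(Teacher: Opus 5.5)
Your decomposition $R=B+\frac{\lambda}{n+1}I$ is fine, but $C_I=2\,\id$, not $\id$. The tensor $I_{\alpha\bar\beta\gamma\bar\delta}=g_{\alpha\bar\beta}g_{\gamma\bar\delta}+g_{\alpha\bar\delta}g_{\gamma\bar\beta}$ has Ricci curvature $(n+1)g$ and acts as $2$ on $S^{2,0}$. Equivalently, $\tr C=n\lambda$, so the mean eigenvalue of $C$ is $\frac{2\lambda}{n+1}$. Hence $C_B=C-\frac{2\lambda}{n+1}\id$, and the hypothesis concerns the ``half-trace'' operator $C_B+\frac{\lambda}{n+1}\id$, not $C_B$. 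Under your reading the theorem would be trivial. A trace-free operator on $S^{2,0}$ that is $k$-nonnegative for some $k<\frac{n(n+1)}{2}$ must vanish, so $B=0$ would follow without any Bochner formula. Running the Bochner argument on $B$ rather than $R$ is itself harmless, since $\nabla B=\nabla R$ and the Lichnerowicz terms agree.

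The substantive gap is the form of the curvature term. It is not $\sum_a\mu_a|S_aB|^2$ plus good-sign $\lambda$-terms. $S^{2,0}$ is not the holonomy algebra, so the Riemannian identity you invoke has no exact analogue. Computing with the commutator formula and the K\"ahler second Bianchi identity gives $\frac18\Ric_L(R,\bar R)=\frac12\sum_A\tau_A|S_AB|^2+2\sum_A\tau_A^3+2\lambda\sum_A\tau_A^2$, where $\tau_A$ are the eigenvalues of $C_B$. The cubic term $\tr(C_B^3)$ has no sign, and no bound on the weights $|S_AB|^2$ alone can control it.

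The missing idea is a regrouping. Since $B$ is Ricci-free, $\sum_A|S_AB|^2=4n|B_{\alpha\bar\beta\gamma\bar\delta}|^2=4n\sum_A\tau_A^2$. So the good-sign term $2\lambda\sum_A\tau_A^2$ is exactly what pays for shifting $\tau_A$ to $\tau_A+\frac{\lambda}{n+1}$, and the cubic term is absorbed by enlarging the weights:
$\frac14\Ric_L(R,\bar R)=\sum_A\bigl(\tau_A+\frac{\lambda}{n+1}\bigr)\bigl(|S_AB|^2+4\tau_A^2\bigr)$.
This is a weighted sum of eigenvalues of precisely the half-trace operator, with weights $|S_AB|^2+4|B(S_A)|^2$ of total $(n+1)|B|^2$.

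The maximal weight is at most $\frac52|B|^2$, which comes from combining two bounds. The first is $|SB|^2\le 6|S|^2|B_{\alpha\bar\beta\gamma\bar\delta}|^2$, proved by a Cauchy--Schwarz case analysis over index multisets using no trace conditions. It is sharp for general algebraic K\"ahler tensors at the rank-two $S=Z_1\otimes Z_1+Z_2\otimes Z_2$, not at rank one. The second is $|B(S)|^2\le|B_{\alpha\bar\beta\gamma\bar\delta}|^2|S|^2$. This is the source of $\frac{2(n+1)}{5}=\frac{n+1}{6/4+1}$. Your inequality $|S_aB|^2\le\frac{5}{2(n+1)}\sum_b|S_bB|^2$, whose total is only $n|B|^2$, is not the relevant one.

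Finally, the maximum principle yields only $\nabla R=0$. Excluding the other Hermitian symmetric spaces still requires the Calabi--Vesentini tables or a strictness argument, and your ``rigidity analysis should force $B=0$'' leaves this open.
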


Here a self-adjoint operator with eigenvalues $\lambda_1 \leq \ldots \leq \lambda_m$ is said to be \textit{$k$-nonnegative} for some $k \geq 1$ if \begin{align*}
     \lambda_1 + \ldots + \lambda_{\lfloor k \rfloor} + (k - \lfloor k \rfloor) \lambda_{\lfloor k \rfloor +1} \ \geq \ 0.
 \end{align*}

Note that the average eigenvalue of $C$ is $\frac{2\lambda}{n+1}$ on a K\"ahler-Einstein manifold, so the operator $C - \frac{\lambda}{n+1} \id$ may be viewed as a ``half-trace''-operator sitting between $C$ and its traceless part. This operator arises naturally in the relevant Bochner formulas. 

Comparing the different nonnegativity conditions on algebraic Calabi curvature operators, not even full positivity of the spectrum of $C$ guarantees the condition in Theorem \ref{maintheorem}, and the condition of Theorem \ref{maintheorem} does not guarantee $k$-nonnegativity of $C$ for $k$ less than roughly $\frac{n+1}{5}$ either. Theorem \ref{maintheorem} shows that much stronger relations must hold for Calabi curvature operators realized by K\"ahler-Einstein manifolds. \vspace{2mm}

The proof of Theorem \ref{maintheorem} is based on a new Bochner formula for Calabi curvature operators. Bochner formulas for tensors are schematically of the type 
\begin{align*}\Delta|T|^2=c_1|\nabla T|^2+c_2\langle \Delta T, T\rangle + \text{curvature term}.
\end{align*}

For the curvature tensor of a K\"ahler-Einstein manifold, which is automatically harmonic, this looks like
\begin{align*}
 \Delta^\mathbb{C} |R_{\alpha \bar\beta \gamma \bar\delta}|^2 = |\nabla_\mu R_{\alpha \bar\beta \gamma \bar\delta}|^2 +|\nabla_{\bar\mu} R_{\alpha \bar\beta \gamma \bar\delta}|^2 + 
      \frac{1}{8}\Ric_L(R, \overline R),
      \end{align*}
where $\Delta^\C:=\frac12\left(\nabla_\mu\nabla_{\bar\mu}+ \nabla_{\bar\mu}\nabla_\mu\right)$ is the complex Laplacian and $\Ric_L$ is a naturally defined $0^{th}$ order curvature term. Our main technical result is an explicit formula for $\Ric_L$ in terms of the Calabi operator of $R$.

\begin{maintheorem}\label{thm main 2}\label{maintheorem2}
Let $(M^n,g)$ be a K\"ahler-Einstein manifold with $\Ric(g)=\lambda g$ for some $\lambda \in \R$, and let $\{S_A\}$ be a unitary eigenbasis of $S^{2,0}$ of $C$ with $C(S_A)=\sigma_A S_A$.
Then 
\begin{align*}
     \Ric_L(R, \overline R) =
      4 \sum \sigma_A |S_A R|^2 + 16 \tr(C^3) - 16\lambda \tr(C^2), 
\end{align*}
 where $S_A R$ is the natural action of $S^{2,0}$ on $(0,4)$-tensors, see  Definition \ref{DefAction}. 

\end{maintheorem}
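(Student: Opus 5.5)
We propose to prove Theorem \ref{maintheorem2} by a direct computation in a unitary frame, reducing $\Ric_L(R,\overline R)$ to contractions of $R_{\alpha\bar\beta\gamma\bar\delta}$ with itself. These contractions will then be rewritten through the Calabi operator.

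First we expand the curvature term. $\Ric_L(R,\overline R)$ comes from commuting covariant derivatives in $\nabla_\mu\nabla_{\bar\mu}R-\nabla_{\bar\mu}\nabla_\mu R$. Applying the Ricci identity to each of the four slots gives terms of two kinds. The first are Ricci-type terms $R_{\alpha\bar\mu}R_{\mu\bar\beta\gamma\bar\delta}$, which collapse to $\lambda|R|^2$ by the Einstein condition. The second are genuinely quadratic terms of the form $R_{\alpha\bar\beta\mu\bar\nu}R_{\nu\bar\mu\gamma\bar\delta}\overline{R_{\alpha\bar\beta\gamma\bar\delta}}$, with all index placements. On a Kähler manifold the second Bianchi identity combined with the Einstein condition makes $R$ harmonic, so $\nabla_{\bar\mu}\nabla_\mu R$ can be expressed through these commutators. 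We use this to fix the exact coefficients, checking the normalization $\tfrac18$ against the stated Bochner formula. The outcome should be
\[
\Ric_L=a\lambda|R|^2+b\,R_{\alpha\bar\beta\gamma\bar\delta}R_{\beta\bar\alpha\mu\bar\nu}R_{\nu\bar\mu\delta\bar\gamma}+c\,R_{\alpha\bar\beta\gamma\bar\delta}R_{\beta\bar\mu\delta\bar\nu}R_{\mu\bar\alpha\nu\bar\gamma}.
\]
Using the full Kähler symmetry, every cubic contraction of $R$ reduces to these two.

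Second, we identify each term with a Calabi quantity. Under the identification $S^{2,0}\otimes\overline{S^{2,0}}$, the tensor $R$ is $C$ itself. Hence $|R|^2=\tr(C^2)$. The contraction pairing $(\alpha\gamma)$ with $(\beta\delta)$ is composition of $C$ with itself three times, which gives $\tr(C^3)$. The other contraction, pairing $(\alpha\beta)$-type indices, is the one that is not a trace of powers of $C$.

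For that term we write $R=\sum_A\sigma_A S_A\otimes\overline{S_A}$, i.e. $R_{\alpha\bar\beta\gamma\bar\delta}=\sum_A\sigma_A (S_A)_{\alpha\gamma}\overline{(S_A)_{\beta\delta}}$. Substituting for one factor of $R$ in the mixed contraction turns it into $\sum_A\sigma_A\langle S_A R, R\rangle$-type expressions. Here $S_A$ acts as a derivation on the four slots of a $(0,4)$-tensor, as in Definition \ref{DefAction}. Expanding $|S_AR|^2$ produces the mixed contraction together with terms of the form $\sigma_A\langle S_A^2\ldots\rangle$. The latter contract back, via completeness of $\{S_A\}$, into $\tr(C^3)$ and $\lambda\tr(C^2)$ terms. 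For this step we use the identity $\sum_A (S_A)_{\alpha\gamma}\overline{(S_A)_{\beta\delta}}=$ the projection onto symmetric tensors, together with $\sum_\mu R_{\alpha\bar\beta\mu\bar\mu}=\lambda g_{\alpha\bar\beta}$. Matching coefficients should then yield $4\sum\sigma_A|S_AR|^2+16\tr(C^3)-16\lambda\tr(C^2)$.

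The main obstacle is the bookkeeping in this last step. The derivation action of $S_A$ on the four slots produces sixteen cross terms in $|S_AR|^2$, and we need each to land on the right contraction with the right coefficient. To control this, we first verify the final formula on $\CP^n$, where $C=\tfrac{2\lambda}{n+1}\id$ and $\Ric_L$ must vanish since the curvature is parallel. We then verify it on a product or flat-like test case to pin down the constants before carrying out the general computation.
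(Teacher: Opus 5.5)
Your plan is correct and follows the paper's proof. Proposition~\ref{General_Second_Derviative} produces exactly your Ricci, $\tr(C^3)$ and mixed cubic terms by using the second Bianchi identity to move $\nabla_\mu$ into a curvature slot and then commuting $[\nabla_{\bar\mu},\nabla_\gamma]$; the bare commutator $[\nabla_\mu,\nabla_{\bar\mu}]$ alone gives only Ricci terms. Proposition~\ref{prop_Eigenvalue_Term} then substitutes $C=\sum_A\sigma_A S_A\otimes\overline{S_A}$ into $|S_AR|^2$, and here your expectation needs one correction: the diagonal terms contract directly to Ricci, with no completeness relation needed, giving $\sum_A\sigma_A|S_AR|^2=8\lambda|R_{\alpha\bar\beta\gamma\bar\delta}|^2-8R_{\nu\bar\mu\gamma\bar\delta}R_{\alpha\bar\beta\mu\bar\nu}R_{\beta\bar\alpha\delta\bar\gamma}$ and no $\tr(C^3)$ contribution.
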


This is established in Theorem \ref{prop:mainbochner}. When analyzing the sign of this term, one is naturally led to regroup in terms of $C-\frac{\lambda}{n+1}\id$. It would be interesting to know whether it can be controlled directly in terms of $C$ as well. However, it is not even apparent that the curvature term is nonnegative even if the whole spectrum of $C$ is. 

On the other hand, we note that nonnegative Calabi curvature operator implies nonnegative bisectional curvature. Hence, the classification result of Mok and Zhong \cite{MokZhongKEsymmetricSpaces} implies that for globally nonnegative Calabi curvature operators of closed K\"ahler-Einstein manifolds, the curvature term in the Bochner formula must vanish. Therefore, one is led to ask

\begin{question}
    What is the largest
$k(n)$ such that for every K\"ahler-Einstein algebraic curvature tensor $R$, $k$-nonnegativity of its
associated Calabi curvature operator \(C\) implies $\operatorname{Ric}_L(R,\overline R) \geq 0$?
\end{question}

In particular, one would hope at least for a positive resolution to

\begin{problem}
   Let \(R\) be a K\"ahler-Einstein algebraic curvature tensor. Prove that
if its associated Calabi curvature operator \(C\) is nonnegative, then $\operatorname{Ric}_L(R,\overline R) \geq 0$.
\end{problem}

We note that these difficulties are in clear contrast to similar situations with other curvature operators, where the Bochner formulas have easily apparent sign if the operator does. This includes the Riemannian operators of the first \cite{TachibanaOriginalTheorem} and second kind \cite{DaiFuEinsteinSecondKind, ChengWangEinsteinConeConditionsSecondKind}, as well as the standard K\"ahler curvature operator \cite{PetersenWinkVanishingHodge}.

These problems also do not arise when one is interested only in recovering the cohomology, see \cite{OT}. In fact, it is possible to show vanishing of primitive cohomology in terms of $k$-nonnegativity of $C$ itself for suitable $k$ depending on the degree, \cite{BNPSWCalabiCurvatureOperator, WangYangWeitzenbockBochnerKodaira}, but these methods cannot recover the isometry type.

\vspace{2mm}

\textbf{Acknowledgments.} This work was initiated at the workshop {\em The Bochner Technique} at the American Institute of Mathematics (AIM) in May 2026. We would like to thank AIM for this opportunity and for their hospitality during our stay.

XL's research is partially supported by NSF-DMS \#2553660 and a start-up grant at Auburn University. 

MW's research is partially supported by travel grant SFI-MPS-TSM-00026032 funded by the Simons Foundation International and administered by the Simons Foundation.

\section{Preliminaries}

\subsection{Conventions} Throughout the paper we use the Einstein summation convention unless indicated otherwise. Pairs of repeated lower indices consisting of a barred index and an unbarred index are summed from $1$ to $n$. Unless otherwise stated, all complex components are evaluated in a local unitary frame.

\textbf{Norms.}
When writing norms of the form $|T|^2$, we take this to mean the full tensor norm $\sum |T_{i_1, \ldots, i_k}|^2$, where each entry may have a bar or not. When writing norms in the form $|T_{a \bar b c \bar d}|^2$, we mean only that part of the norm with bars in the indicated positions. Thus, e.g., $|R|^2=4|R_{a \bar b c \bar d}|^2$ for K\"ahler curvature tensors.

\textbf{Laplacians.}
Different methods of defining the Laplacian of a function sometimes differ up to constants. We will use $\Delta^\C=\frac12\left(\nabla_\mu\nabla_{\bar\mu}
      + \nabla_{\bar\mu}\nabla_\mu\right)$, which differs by a factor of $\frac{1}{2}$ from the real trace of the real Hessian. Since we will mostly be doing complex computations, we write $\Delta f = \Delta^{\C}f=\frac{1}{2}\Delta^{\R}f$ and use the superscript only where explicit disambiguation seems preferable.

\subsection{K\"ahler Manifolds}
 For basic properties of K\"ahler manifolds, the reader is referred to \cite[Chapter 2]{Chow}. For a  K\"ahler manifold $(M,g)$ of complex dimension $n$, denote by $R_{\alpha\bar\beta\gamma\bar\delta}$ its K\"ahler curvature tensor.

Since $R_{\alpha \beta \cdot \cdot}=R_{\bar \alpha \bar \beta \cdot \cdot}=0,$ the first Bianchi identity implies the symmetries 
\begin{equation*}
R_{\alpha\bar\beta\gamma\bar\delta} = R_{\gamma\bar\beta\alpha\bar\delta}=R_{\alpha\bar\delta\gamma\bar\beta}=R_{\gamma\bar\delta\alpha\bar\beta}
\end{equation*}
and 
\begin{equation*}
    \overline{R_{\alpha\bar\beta\gamma\bar\delta}} =R_{\beta \bar \alpha \delta \bar \gamma}.
\end{equation*}

An algebraic K\"ahler curvature tensor $T$ is an element of $\otimes^4 T^*_{\C}M$ which is an algebraic curvature tensor, i.e., T satisfies
\begin{align*}
   T_{ijkl}& = -T_{jikl}=-T_{ijlk}=T_{klij}, \\ 
   & T_{ijkl}+T_{jkil}+T_{kijl}=0, \\
   & \hspace{7mm} \overline{T_{ijkl}} =T_{\bar i \bar j \bar k \bar l},
\end{align*}
that in addition satisfies the K\"ahler conditions 
\begin{align*}
    T_{\alpha \beta kl}=T_{\bar \alpha \bar \beta kl}=0.
\end{align*}

It follows that
\begin{equation*}
T_{\alpha\bar\beta\gamma\bar\delta} = T_{\gamma\bar\beta\alpha\bar\delta}=T_{\alpha\bar\delta\gamma\bar\beta}=T_{\gamma\bar\delta\alpha\bar\beta}.
\end{equation*}

For the curvature operator of a K\"ahler manifold, one also has the K\"ahler second Bianchi identity, which is given by
\begin{align*}
    \nabla_{\mu} R_{\alpha\bar\beta\gamma\bar\delta}  &= \nabla_{\alpha} R_{\mu \bar\beta\gamma\bar\delta} , \\ \nabla_{\bar\mu} R_{\alpha\bar\beta\gamma\bar\delta}  &= \nabla_{\bar \beta} R_{\alpha\bar\mu\gamma\bar\delta} \notag 
\end{align*}
and the commutator formula for a $(p,q)$-tensor $T$ is given by
\begin{align}
    \label{eq:2commutator}
\nabla_\alpha \nabla_{\bar\beta} & T_{\gamma_1 \cdots \gamma_p \bar\delta_1 \cdots \bar\delta_q} - \nabla_{\bar\beta} \nabla_\alpha T_{\gamma_1 \cdots \gamma_p \bar\delta_1 \cdots \bar\delta_q} \\
= \ & -\sum_{i=1}^p R_{\alpha \bar\beta \gamma_i \bar\eta} T_{\gamma_1 \cdots \gamma_{i-1}\eta \gamma_{i+1} \cdots \gamma_p \bar\delta_1 \cdots \bar\delta_q} \notag \\
\ & +\sum_{j=1}^q R_{\alpha \bar\beta \eta \bar{\delta}_j} T_{\gamma_1 \cdots \gamma_p \bar\delta_1 \cdots \bar{\delta}_{j-1} \bar\eta \bar{\delta}_{j+1} \cdots \bar\delta_q}. \notag
\end{align}

In complex coordinates, the K\"ahler curvature tensor decomposes as
\begin{eqnarray*}
    R_{\alpha\bar\beta\gamma\bar\delta} &=&  \frac{S}{n(n+1)}I_{\alpha\bar\beta\gamma\bar\delta} +B_{\alpha\bar\beta\gamma\bar\delta} \\
    && + \frac{1}{n+2} \left(g_{\alpha\bar\beta}\mathring{R}_{\gamma\bar\delta} +g_{\gamma\bar\delta}\mathring{R}_{\alpha\bar\beta} +g_{\alpha\bar\delta}\mathring{R}_{\gamma\bar\beta} +g_{\gamma\bar\beta}\mathring{R}_{\alpha\bar\delta}\right) ,
\end{eqnarray*}
where $\mathring{R}_{\alpha \bar \beta}=R_{\alpha \bar \beta} -\frac{S}{n}g_{\alpha \bar \beta}=R_{\alpha \bar \beta \gamma \bar \gamma} -\frac{S}{n}g_{\alpha \bar \beta} $ is the traceless Ricci tensor, $S = \Ric_{\alpha \bar \alpha}$ is half of the Riemannian scalar curvature and $B$ is the Bochner curvature tensor. 

\subsection{Curvature Operators}
\label{DefinitionCalabi}
Let \((M^n,g)\) be a K\"ahler manifold, and let
\(\{Z_\alpha\}_{\alpha=1}^n\) be a local unitary frame for
\(T^{1,0}M\). We extend \(g\) complex bilinearly to tensor products and
use the conventions
$$Z\wedge W=Z\otimes W-W\otimes Z$$
and 
$$Z\odot W=Z\otimes W+W\otimes Z. $$

The K\"ahler curvature tensor $R_{\alpha \bar \beta \gamma \bar \delta}$ induces two curvature operators. The K\"ahler curvature operator $K:\wedge^{1,1} \to \wedge^{1,1}$ is defined as
\begin{equation*}
    K(\omega)_{\alpha \bar\beta} = R_{\alpha \bar \beta \gamma \bar \delta} \omega_{\delta \bar \gamma } 
\end{equation*}

or 
\begin{equation*}
    \langle K(Z_\alpha \wedge \bar{Z}_\beta), \overline{Z_\gamma \wedge \bar{Z}_\delta} \rangle= 2R(Z_\alpha, \bar{Z}_\beta, Z_\delta, \bar{Z}_\gamma)=\langle R_{\alpha \bar \beta \delta \bar \gamma} Z_\gamma \wedge \bar Z_\delta, \overline{Z_\gamma \wedge \bar Z_\delta}\rangle.
\end{equation*}
The Calabi curvature operator $C:S^{2,0} \to S^{2,0}$ is given by
\begin{equation*}
    C(h)_{ \bar \beta  \bar \delta}=R_{\alpha \bar \beta \gamma \bar \delta} h_{ \bar \alpha  \bar \gamma}
\end{equation*}
or 
\begin{align*}
    \langle C & (Z_\alpha \od Z_\gamma) , \overline{Z_\beta \od Z_\delta}\rangle= 4R_{\alpha \bar \beta \gamma \bar \delta} , \\
    & C(Z_\alpha \od Z_\gamma)=R_{\alpha \bar \beta \gamma \bar \delta} Z_\beta \od Z_\delta
\end{align*}
where in the last line one has to note that the Einstein convention sums $\beta, \delta$, so for $\beta \ne \delta$ there are two terms with contribution $Z_\beta\od Z_\delta$, while for $\beta=\delta$ there is only one.

\begin{example}
The curvature tensor of $\mathbb{CP}^n$ with the Fubini-Study metric normalized with constant holomorphic sectional curvature $2$ is given by 
$$I_{\alpha\bar\beta\gamma\bar\delta}=g_{\alpha\bar\beta}g_{\gamma\bar\delta}+g_{\alpha\bar\delta}g_{\gamma\bar\beta}.$$
For $h\in S^{2,0}$, we have
$$C(h)_{\bar \beta \bar \delta}=I_{\alpha\bar\beta\gamma\bar\delta}h_{\bar \alpha \bar \gamma}=(g_{\alpha\bar\beta}g_{\gamma\bar\delta}+g_{\alpha\bar\delta}g_{\gamma\bar\beta})h_{\bar \alpha \bar \gamma}=h_{\bar \beta \bar \delta}+h_{\bar \delta \bar \beta}=2h_{\bar \beta \bar \delta}.$$
Therefore, $C=2 \id_{S^{2,0}}$ on $\mathbb{CP}^n$. 
\end{example}

\begin{example}
Let \(Q^n\) be the complex quadric with its standard metric, normalized with $\Ric_{\alpha\bar\beta}=n g_{\alpha\bar\beta}$. Its curvature tensor is given by
\[
R_{\alpha\bar\beta\gamma\bar\delta}
=
g_{\alpha\bar\beta}g_{\gamma\bar\delta}
+
g_{\alpha\bar\delta}g_{\gamma\bar\beta}
-
q_{\alpha\gamma}\overline{q_{\beta\delta}},
\]
where \(q_{\alpha\gamma}=q_{\gamma\alpha}\) is the symmetric bilinear form
associated to the real structure of the quadric. It satisfies $q_{\alpha\gamma}\overline{q_{\beta\delta}}g^{\gamma\bar\delta}
=
g_{\alpha\bar\beta}$.
In an adapted unitary frame, \(q_{\alpha\gamma}\) becomes the identity
matrix, but we keep the invariant notation \(q\) rather than writing
Kronecker symbols. The Calabi operator $C$ acts on \(h\in S^{2,0}\) by
$$Ch=2h - \langle h, \bar q \rangle q$$
Hence \(C\) has two distinct eigenvalues: $\sigma_0=2-n$ on the complex line spanned by \(q\), and $\sigma_1=2$ with multiplicity $\frac{(n-1)(n+2)}2$ on the orthogonal complement. 
\end{example}

\begin{example}
Let $M=\mathbb{CP}^p\times \mathbb{CP}^q$, $p+q=n$, be equipped with the product Kähler-Einstein metric normalized by $\operatorname{Ric}_{\alpha\bar\beta}= g_{\alpha\bar\beta}$.
The Calabi curvature operator diagonalizes according to the decomposition
\[
S^{2,0}(V\oplus W)
=
S^2V\oplus (V\odot W)\oplus S^2W,
\]
where $V=T^{1,0}\mathbb{CP}^p$ and $W=T^{1,0}\mathbb{CP}^q$. 
Its eigenvalues are $\frac{2}{p+1}$ on $S^2V$, $0$ on $V\odot W$, and $\frac{2}{q+1}$ on $S^2W$. 
\end{example}

\subsection{The Action}

Given any tensor $S\in T^{1,0}\otimes T^{1,0}$, we may view $S$ as a linear map $T_\C M\to T_\C M$ via the natural ``raising of the first index'' $$Z_i\otimes Z_k (v)=g(Z_i, v)Z_k.$$
Note that $S$ induces the zero map on $T^{1,0}$ and maps $T^{0,1}$ to $T^{1,0}$.

This lets us define an induced action on tensors:

\begin{definition}
\label{DefAction}
    Given $S\in T^{1,0} \otimes T^{1,0}$ and $T\in (T^{p,q})^* $ we may define $ST\in (T^{p-1, q+1})^* $ via 
    
    $ST(\cdot, \ldots, \cdot):= -T(S(\cdot), \cdot, \ldots, \cdot)-\ldots-T(\cdot, \ldots, \cdot, S(\cdot)).$
\end{definition}

The only case relevant for the rest of this paper is that where $S$ is symmetric and $T$ is an algebraic K\"ahler curvature operator. Here we have

\begin{lemma}\label{ST_computation}
    Let \(T\) be an algebraic K\"ahler curvature tensor,
and let \(S\in S^{2,0}\). 
Then for $x, y, z, w \in T^{1,0}$,
\begin{align*}
-ST(\bar x, \bar y, \bar z, w)=T(S(\bar x), \bar y, \bar z, w)-T(S(\bar y), \bar x, \bar z, w).
\end{align*}
\end{lemma}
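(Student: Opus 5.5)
Expanding Definition \ref{DefAction} gives four terms,
\[
ST(\bar x,\bar y,\bar z,w) = -T(S\bar x,\bar y,\bar z,w)-T(\bar x,S\bar y,\bar z,w)-T(\bar x,\bar y,S\bar z,w)-T(\bar x,\bar y,\bar z,Sw).
\]
We will evaluate each of them using only two facts: where $S$ sends vectors, and the Kähler type conditions on $T$.

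First, $S$ annihilates $T^{1,0}$. Indeed, $S$ is a combination of terms $Z_i\otimes Z_k$ with $Z_i\in T^{1,0}$, and $g(Z_i,w)=0$ for $w\in T^{1,0}$, since $g$ is extended complex bilinearly. Hence $Sw=0$ and the fourth term vanishes. Second, $S$ maps $T^{0,1}$ into $T^{1,0}$. So in the third term the first two slots $\bar x,\bar y$ are both of type $(0,1)$, and $T_{\bar\alpha\bar\beta kl}=0$ kills it.

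Two terms remain:
\[
-ST(\bar x,\bar y,\bar z,w)=T(S\bar x,\bar y,\bar z,w)+T(\bar x,S\bar y,\bar z,w).
\]
The second term is antisymmetric in its first two slots, so $T(\bar x,S\bar y,\bar z,w)=-T(S\bar y,\bar x,\bar z,w)$. Substituting this gives exactly the claimed identity. Symmetry of $S$ is not needed for this particular formula. It only matters later, when the two remaining terms are compared in components, e.g.\ $S(\bar Z_\alpha)=S_{\alpha k}Z_k$ with $S_{\alpha k}=S_{k\alpha}$.

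The only real obstacle is bookkeeping. One has to check that the convention ``raising of the first index'' via the bilinear $g$ really gives $S(T^{1,0})=0$ and $S(T^{0,1})\subset T^{1,0}$, and keep track of the sign from the minus signs in Definition \ref{DefAction}. Everything else follows from the Kähler vanishing condition and antisymmetry.
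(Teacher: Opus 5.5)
Your proof is correct and follows the same route as the paper: expand Definition \ref{DefAction} into four terms, observe that the last two vanish, and apply skew-symmetry of $T$ in its first two slots. The paper simply writes ``$+0+0$'' for the two vanishing terms, whereas you give the reasons ($Sw=0$ for $w\in T^{1,0}$, and $T_{\bar\alpha\bar\beta kl}=0$); your remark that symmetry of $S$ is not used here is also accurate.
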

\begin{proof}
    By definition we have $$-ST=T(S\cdot, \cdot,\cdot, \cdot)+T(\cdot, S\cdot,\cdot, \cdot)+T(\cdot, \cdot,S\cdot, \cdot)+T(\cdot, \cdot,\cdot, S\cdot).$$
    Thus $$-ST(\bar{Z}_\alpha, \bar{Z}_\beta, \bar{Z}_\gamma, Z_\delta)=T(S(\bar{Z}_\alpha), \bar{Z}_\beta, \bar{Z}_\gamma, Z_\delta)+T(\bar{Z}_\alpha, S(\bar{Z}_\beta), \bar{Z}_\gamma, Z_\delta)+0+0,
    $$
    or 
\begin{align*}
    -(ST)_{\bar \alpha \bar \beta \bar \gamma \delta}=S_{\bar \mu \bar \alpha}T_{\mu \bar \beta \bar \gamma \delta}+S_{\bar \mu \bar \beta}T_{\bar \alpha \mu \bar \gamma \delta}.
\end{align*}
The claim follows by noting that $T$ is skew-symmetric in the first two entries.
\end{proof}

In particular, we get for the norm:

\begin{lemma}\label{lemma |S_A B|^2}
Let \(T\) be an algebraic K\"ahler curvature tensor,
and let \(S\in S^{2,0}\). 
Then
\begin{equation*}
 |ST|^2=4 \sum\limits_{\alpha \beta \gamma \delta} \left | T(S(\bar{Z}_\alpha), \bar{Z}_\beta, \bar{Z}_\gamma, Z_\delta)-T(S(\bar{Z}_\beta), \bar{Z}_\alpha, \bar{Z}_\gamma, Z_\delta)\right |^2.
\end{equation*}
In particular, if \(S\) is diagonalized in a unitary frame \(\{Z_\alpha\}_{\alpha=1}^n\) of $T^{1,0}$ with nonnegative eigenvalues $\{\rho_\alpha\}_{\alpha=1}^n$, i.e., 
\[
    S=\sum_{\alpha=1}^n \rho_\alpha Z_\alpha \otimes Z_\alpha,
    \qquad \rho_\alpha\geq 0,
\]
then
\begin{equation}\label{NormST}
 |S T|^2 = 4\sum\limits_{\alpha \beta \gamma \delta}\left| \rho_\alpha T_{\alpha \bar \beta \gamma\bar\delta} - \rho_\beta T_{\beta \bar \alpha \gamma\bar\delta} \right|^2.    
\end{equation}
\end{lemma}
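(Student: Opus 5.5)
The plan is to compute $|ST|^2$ by sorting the components of $ST$ according to their bar pattern. Since $ST$ is a tensor in $(T^{p-1,q+1})^*$ built from a K\"ahler curvature tensor (type $(2,2)$), it has exactly one unbarred slot among its four entries. By Definition \ref{DefAction}, $ST$ inherits skew-symmetry in the first pair and in the last pair of slots, because $S$ is applied slot by slot. So its possibly nonzero components are
\[
(ST)_{\bar\alpha\bar\beta\bar\gamma\delta},\qquad (ST)_{\bar\alpha\bar\beta\gamma\bar\delta},\qquad (ST)_{\alpha\bar\beta\bar\gamma\bar\delta},\qquad (ST)_{\bar\alpha\beta\bar\gamma\bar\delta}.
\]
The two skew-symmetries show that each of these four families has the same sum of squared moduli as the first.

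To justify this, I will not assume that $ST$ has the pair symmetry. Instead I will apply Lemma \ref{ST_computation} directly in each pattern.
\begin{itemize}
\item In the pattern $(\bar x,\bar y,z,\bar w)$, $S$ acts only on the first two slots. Using skew-symmetry of $T$ in its last pair, this gives the negative of the expression in the pattern $(\bar x,\bar y,\bar w,z)$.
\item For the patterns with the unbarred entry in the first pair, $S$ acts on the last two slots. Using $T_{ijkl}=T_{klij}$, this produces the same set of moduli.
\end{itemize}
Summing over the four patterns then gives
\[
|ST|^2=4\sum_{\alpha\beta\gamma\delta}\bigl|T(S(\bar Z_\alpha),\bar Z_\beta,\bar Z_\gamma,Z_\delta)-T(S(\bar Z_\beta),\bar Z_\alpha,\bar Z_\gamma,Z_\delta)\bigr|^2 .
\]
This is the first claim.

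For the second claim, take $S=\sum\rho_\alpha Z_\alpha\otimes Z_\alpha$. Then $S(\bar Z_\alpha)=g(Z_\alpha,\bar Z_\alpha)\rho_\alpha Z_\alpha=\rho_\alpha Z_\alpha$, with no sum and using unitarity. Substituting this, the summand becomes
\[
\rho_\alpha T(Z_\alpha,\bar Z_\beta,\bar Z_\gamma,Z_\delta)-\rho_\beta T(Z_\beta,\bar Z_\alpha,\bar Z_\gamma,Z_\delta).
\]
Next I use skew-symmetry in the last pair, $T_{\alpha\bar\beta\bar\gamma\delta}=-T_{\alpha\bar\beta\delta\bar\gamma}$, which only changes the overall sign. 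Finally I relabel $\gamma\leftrightarrow\delta$ in the sum. Together these give \eqref{NormST}. Nonnegativity of $\rho_\alpha$ is not actually needed beyond choosing the diagonal form; realness suffices.

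The main obstacle is the bookkeeping needed to confirm that the four bar patterns each contribute equally, and in particular to pin down the factor $4$. I also need to make sure no components with two unbarred indices survive. This follows because $S$ kills $T^{1,0}$ and sends $T^{0,1}$ to $T^{1,0}$, so $ST$ has type with exactly one unbarred slot. I would check this first on a single pattern and then transport the result to the others by the skew and pair symmetries.
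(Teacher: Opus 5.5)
Your proposal is correct and follows essentially the same route as the paper: Lemma \ref{ST_computation}, together with the symmetries inherited by $ST$, shows that the four possible positions of the single holomorphic slot contribute equally to the norm, which gives the factor $4$. The diagonal formula then follows by substituting $S(\bar Z_\alpha)=\rho_\alpha Z_\alpha$, using skew-symmetry in the last pair, and relabelling $\gamma\leftrightarrow\delta$. You simply spell out bookkeeping that the paper compresses into one line, including the use of pair symmetry to pass between the first and last pair; your opening remark that the two skew-symmetries alone suffice overstates this, but you supply pair symmetry right afterwards.
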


\begin{proof}
    The claim follows from Lemma \ref{ST_computation}, noting that the symmetries of $T$ pass to $ST$, so the four possible positions for the holomorphic entry contribute equally to the norm.
\end{proof}

\section{A New Bochner Formula}

The proof of the main theorem is based on a Bochner formula for curvature tensors. The standard Bochner technique produces expressions of the form
\begin{align}\label{realbochner}
    \Delta^\R \frac{1}{2} |T|^2 = |\nabla T|^2 + c \Ric_L(T,\bar T)
\end{align}
for tensors $T$ harmonic with respect to some Lichnerowicz-Laplacian $\Delta_L=\nabla^*\nabla+ c \Ric_L$, where $\Ric_L$ is a 0-th order curvature term depending on the type of tensor under consideration. For the curvature operator of an Einstein manifold, this is the case for $c=\frac{1}{2}$, see \cite[Theorem 9.4.2]{PetersenRiemGeom}. 

In complex notation, taking note of different Laplacians and norms, for algebraic K\"ahler curvature operators \eqref{realbochner} becomes 
\begin{align*}
    \Delta^\C |T_{\alpha \bar \beta \gamma \bar \delta}|^2 = |\nabla_\mu T_{\alpha \bar \beta \gamma \bar \delta}|^2 + |\nabla_{\bar \mu} T_{\alpha \bar \beta \gamma \bar \delta}|^2 + \frac{1}{8} \Ric_L(T,\bar T).
\end{align*}

It is clear from the form of the formula that a maximum principle forces $T$ to be parallel if we can show $\Ric_L(T, \bar T)\ge 0$. Thus the first step in the proof of the main theorem and the goal of this section is to produce an expression for $\Ric_L(R,\bar R)$ in terms of the Calabi curvature tensor.

\begin{theorem}\label{prop:mainbochner}
Let \((M,g)\) be a K\"ahler-Einstein manifold with \(R_{\alpha\bar\beta}=\lambda g_{\alpha\bar\beta}\). Let \(\{S_A\}\) be a unitary eigenbasis of \(S^{2,0}\) for the Calabi curvature operator \(C\), with \(C(S_A)=\sigma_A S_A\). Then 
\begin{eqnarray*}
    \Ric_L(R, \bar R)=4\sum_{A} \sigma_A |S_A R |^2 + 16 \tr(C^3) -16\lambda \tr(C^2).
\end{eqnarray*}

\end{theorem}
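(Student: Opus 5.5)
I plan to compute $\Delta^\C |R_{\alpha\bar\beta\gamma\bar\delta}|^2$ directly in a unitary frame and identify the zeroth-order remainder. Harmonicity of $R$ on a K\"ahler-Einstein manifold comes from the K\"ahler second Bianchi identity. Writing $\nabla_{\bar\mu}\nabla_\mu R_{\alpha\bar\beta\gamma\bar\delta}=\nabla_{\bar\mu}\nabla_\alpha R_{\mu\bar\beta\gamma\bar\delta}$, I commute the derivatives with \eqref{eq:2commutator}. The term $\nabla_\alpha\nabla_{\bar\mu}R_{\mu\bar\beta\gamma\bar\delta}=\nabla_\alpha\nabla_{\bar\beta}R_{\mu\bar\mu\gamma\bar\delta}=\nabla_\alpha\nabla_{\bar\beta}\Ric_{\gamma\bar\delta}$ vanishes since $\Ric=\lambda g$. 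Doing the same for $\nabla_\mu\nabla_{\bar\mu}R$, the rough Laplacian $\nabla_\mu\nabla_{\bar\mu}R+\nabla_{\bar\mu}\nabla_\mu R$ becomes a sum of quadratic terms $R * R$. Pairing with $\bar R$ then gives $\frac18\Ric_L(R,\bar R)$ as an explicit quadratic polynomial in $R$.

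This polynomial should consist of the Ricci terms, which contribute multiples of $\lambda |R_{\alpha\bar\beta\gamma\bar\delta}|^2$, together with two genuinely cubic contractions:
\begin{align*}
P_1=R_{\alpha\bar\beta\gamma\bar\delta}R_{\beta\bar\alpha\mu\bar\nu}R_{\nu\bar\mu\delta\bar\gamma}, \qquad P_2=R_{\alpha\bar\beta\gamma\bar\delta}R_{\mu\bar\alpha\nu\bar\gamma}R_{\beta\bar\mu\delta\bar\nu}.
\end{align*}
Up to the $C$-convention factors (note $\langle C(Z_\alpha\odot Z_\gamma),\overline{Z_\beta\odot Z_\delta}\rangle=4R_{\alpha\bar\beta\gamma\bar\delta}$), $P_2$ is the contraction pairing holomorphic slots $(\alpha,\gamma)$ against antiholomorphic slots. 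Up to a universal constant, it therefore equals $\tr(C^3)$. Similarly $|R_{\alpha\bar\beta\gamma\bar\delta}|^2$ is a multiple of $\tr(C^2)$, which accounts for the $-16\lambda\tr(C^2)$ term. The other contraction $P_1$ pairs across the $\wedge^{1,1}$ slots and is naturally $\tr(K^3)$-like, so it is not directly a Calabi invariant. It must be converted.

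For this conversion I will use the right-hand side. I expand $\sum_A\sigma_A|S_AR|^2$ using Lemma \ref{ST_computation} and Lemma \ref{lemma |S_A B|^2}:
\[|S_AR|^2=4\sum|(S_A)_{\bar\mu\bar\alpha}R_{\mu\bar\beta\bar\gamma\delta}-(S_A)_{\bar\mu\bar\beta}R_{\mu\bar\alpha\bar\gamma\delta}|^2.\]
Summing $\sum_A\sigma_A (S_A)_{\bar\mu\bar\alpha}\overline{(S_A)_{\bar\nu\bar\rho}}$ replaces the eigenbasis by the kernel of $C$ itself, i.e.\ a symmetrization of $R_{\mu\bar\nu\alpha\bar\rho}$. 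Hence $\sum_A\sigma_A|S_AR|^2$ is a combination of a \emph{diagonal} term and a \emph{cross} term. The diagonal term, where the same index is acted on twice, contracts via $\sum_\alpha R_{\mu\bar\nu\alpha\bar\alpha}=\lambda g_{\mu\bar\nu}$ or reproduces the $\tr(C^3)$ pattern $P_2$. The cross term is exactly a $P_1$-type contraction. This is the key identity: $4\sum\sigma_A|S_AR|^2=aP_1+bP_2+c\lambda|R|^2$. I then solve for $P_1$ and substitute into the Bochner expression. The remaining coefficients of $\tr(C^3)$ and $\lambda\tr(C^2)$ should come out to $16$ and $-16$.

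The main obstacle is bookkeeping of constants and index patterns. There are the factors of $2$ and $4$ from $\odot$ and the $S^{2,0}$ inner product. There is the conversion $|R|^2=4|R_{\alpha\bar\beta\gamma\bar\delta}|^2$, and the $\frac18$ relating real and complex Bochner formulas. One must also correctly recognize which cubic contractions coincide under the K\"ahler symmetries $R_{\alpha\bar\beta\gamma\bar\delta}=R_{\gamma\bar\beta\alpha\bar\delta}=R_{\alpha\bar\delta\gamma\bar\beta}$. To check the constants, I will first test the final identity on $\CP^n$, where $C=2\id$, the left side is $0$, and $S_AR$ is computable. I will then test it on the quadric $Q^n$ and on $\CP^p\times\CP^q$, using the examples above, to pin down $a,b,c$ unambiguously.
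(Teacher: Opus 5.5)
Your proposal is correct and follows the paper's route. The paper derives the Bochner formula from the K\"ahler second Bianchi identity and the commutator formula (Proposition \ref{General_Second_Derviative}), then expands $\sum_A\sigma_A|S_AT|^2$ via $\sum_A\sigma_A s^A_{\bar\mu\bar\alpha}\overline{s^A_{\bar\nu\bar\beta}}=R_{\nu\bar\mu\beta\bar\alpha}$ into a Ricci (diagonal) term minus the mixed $\tr(K^3)$-type contraction (Proposition \ref{prop_Eigenvalue_Term}); it uses this to eliminate that contraction, leaving $2\tr(C^3)$ from the cyclic one. The only differences are that the paper first works with a general curvature-type tensor $T$ against the background $R$ before setting $T=R$, and that the diagonal term contributes no $\tr(C^3)$ piece, so your $b=0$.
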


\begin{remark}\label{rem:traceremark}
    Expanding the curvature in the eigenvalue term into scalar and Bochner parts, we equivalently have, still assuming Einstein, that

$$\Ric_L(R, \bar R)=4\sum_{A} \sigma_A |S_A B |^2 + 16 \tr(C^3) -\frac{16(n+5)}{n+1}\lambda \tr(C^2) +\frac{32n(n+3)}{(n+1)^2} \lambda^3 .$$
\end{remark}

We begin by establishing a general formula for the Laplacian of K\"ahler curvature tensor fields. This is similar to the known special cases of the curvature tensor $R$, see \cite[Proposition 2.79]{Chow} or the Bochner tensor $B$, see \cite{IK04} or \cite{CDLR19}. 

\begin{proposition}\label{General_Second_Derviative}
Let $(M,g)$ be a K\"ahler manifold and let $T$ be a smooth algebraic K\"ahler curvature tensor field, i.e.,
$T$ is a section of $\otimes^4 T_{\mathbb{C}}^*M$ that is an algebraic K\"ahler operator at every point of $M$.

Suppose that $T$ also satisfies the K\"ahler second Bianchi identity
\begin{align}\label{eq:2K-Bianchi g}
    \nabla_{\mu} T_{\alpha\bar\beta\gamma\bar\delta}  &= \nabla_{\alpha} T_{\mu \bar\beta\gamma\bar\delta} , \\ \nabla_{\bar\mu} T_{\alpha\bar\beta\gamma\bar\delta}  &= \nabla_{\bar \beta} T_{\alpha\bar\mu\gamma\bar\delta}. \notag 
\end{align}
Then we have  
\begin{align*}
\Delta |T_{\alpha\bar\beta\gamma\bar\delta}|^2 
=& \ 2 \nabla_\gamma \nabla_{\bar \delta} \Ric(T)_{\alpha \beta}T_{\beta\bar\alpha\delta\bar\gamma} \\ 
& \ + 2 R_{\alpha\bar\mu\gamma\bar\nu}T_{\nu\bar\beta\mu\bar\delta}
     T_{\beta\bar\alpha\delta\bar\gamma}  
   - 4R_{\nu\bar\mu\gamma\bar\delta}T_{\alpha\bar\beta\mu\bar\nu}
     T_{\beta\bar\alpha\delta\bar\gamma}  \\
     & \ +2 R_{\nu \bar\beta} T_{\alpha\bar\nu\gamma\bar\delta}T_{\beta\bar\alpha\delta\bar\gamma}
   + |\nabla_\mu T_{\alpha\bar\beta\gamma\bar\delta}|^2
   + |\nabla_{\bar\mu}T_{\alpha\bar\beta\gamma\bar\delta}|^2.
\end{align*}
Here $\Ric(T)_{\alpha \bar\beta} = T_{\alpha \beta \mu \bar\mu}$. 
\end{proposition}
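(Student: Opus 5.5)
Since $T$ has real structure, $\overline{T_{\alpha\bar\beta\gamma\bar\delta}}=T_{\beta\bar\alpha\delta\bar\gamma}$, so $|T_{\alpha\bar\beta\gamma\bar\delta}|^2=T_{\alpha\bar\beta\gamma\bar\delta}T_{\beta\bar\alpha\delta\bar\gamma}$. The plan is to expand
\begin{align*}
\Delta |T_{\alpha\bar\beta\gamma\bar\delta}|^2=\tfrac12(\nabla_\mu\nabla_{\bar\mu}+\nabla_{\bar\mu}\nabla_\mu)(T_{\alpha\bar\beta\gamma\bar\delta}T_{\beta\bar\alpha\delta\bar\gamma})
\end{align*}
by the Leibniz rule. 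The first-order cross terms give exactly $|\nabla_\mu T_{\alpha\bar\beta\gamma\bar\delta}|^2+|\nabla_{\bar\mu}T_{\alpha\bar\beta\gamma\bar\delta}|^2$. Complex conjugation exchanges the two second-order terms, so it suffices to compute $(\nabla_{\bar\mu}\nabla_\mu T_{\alpha\bar\beta\gamma\bar\delta})T_{\beta\bar\alpha\delta\bar\gamma}$ and the analogous term with the other ordering. We then symmetrize, using the real structure of the final expression.

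The core step is to rewrite the rough Laplacian of $T$ using the K\"ahler second Bianchi identity \eqref{eq:2K-Bianchi g} twice, moving the contracted index $\mu$ onto the tensor slots.
\begin{itemize}
\item First write $\nabla_{\bar\mu}\nabla_\mu T_{\alpha\bar\beta\gamma\bar\delta}=\nabla_{\bar\mu}\nabla_\gamma T_{\alpha\bar\beta\mu\bar\delta}$, using the holomorphic Bianchi identity together with the symmetry $T_{\mu\bar\beta\gamma\bar\delta}=T_{\gamma\bar\beta\mu\bar\delta}$.
\item Commute the derivatives with \eqref{eq:2commutator}. This produces $\nabla_\gamma\nabla_{\bar\mu}T_{\alpha\bar\beta\mu\bar\delta}$ plus curvature terms of the form $R_{\gamma\bar\mu\,\cdot\,\bar\eta}T$ and $R_{\gamma\bar\mu\eta\,\bar\cdot}T$, one for each of the four slots.
\item Apply the antiholomorphic Bianchi identity: $\nabla_{\bar\mu}T_{\alpha\bar\beta\mu\bar\delta}=\nabla_{\bar\delta}T_{\alpha\bar\beta\mu\bar\mu}=\nabla_{\bar\delta}\Ric(T)_{\alpha\bar\beta}$. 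This yields the term $\nabla_\gamma\nabla_{\bar\delta}\Ric(T)_{\alpha\bar\beta}$.
\end{itemize}
The other ordering $\nabla_\mu\nabla_{\bar\mu}T$ is handled the same way, or reduced to the first by one more commutator. Since the contractions of $R$ with $g$ produce Ricci terms, this reduction is where $R_{\nu\bar\beta}T_{\alpha\bar\nu\gamma\bar\delta}$ appears.

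Next we collect the curvature terms. Four terms come from the commutator, one per index slot.
\begin{itemize}
\item The term at the slot where $\mu$ is contracted against $T_{\alpha\bar\beta\mu\bar\delta}$ becomes a Ricci-type term.
\item The slot $\alpha$ term gives $R_{\gamma\bar\mu\alpha\bar\nu}T_{\nu\bar\beta\mu\bar\delta}$, which equals $R_{\alpha\bar\mu\gamma\bar\nu}T_{\nu\bar\beta\mu\bar\delta}$ by the K\"ahler symmetries.
\item The slots $\bar\beta$ and $\bar\delta$ give terms like $R_{\gamma\bar\mu\nu\bar\beta}T_{\alpha\bar\nu\mu\bar\delta}$.
\end{itemize}
After contracting with $T_{\beta\bar\alpha\delta\bar\gamma}$, we relabel indices and use the symmetries $T_{\alpha\bar\beta\gamma\bar\delta}=T_{\gamma\bar\beta\alpha\bar\delta}=T_{\alpha\bar\delta\gamma\bar\beta}$ to recognize several of these as equal. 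This should merge them into the coefficients $2$ and $-4$ of the two cubic terms $R\,T\,T$ in the statement.

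The main obstacle I expect is exactly this bookkeeping. The difficulty is tracking signs and conjugations between the two orderings $\nabla_\mu\nabla_{\bar\mu}$ and $\nabla_{\bar\mu}\nabla_\mu$, and verifying that the resulting real expression combines into the stated coefficients (including the factor $2$ on the $\Ric(T)$ Hessian term and on $R_{\nu\bar\beta}$) rather than some other split. My first check would be to test the final formula on $T=I$ over $\mathbb{CP}^n$, where both sides vanish. This will fix the relative sign between the $2$ and $-4$ terms and the Ricci term.
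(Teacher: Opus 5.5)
Your plan is correct and is essentially the paper's proof. The paper runs the same Bianchi--commutator--Bianchi chain, only in the opposite direction: it starts from $\nabla_\gamma\nabla_{\bar\delta}\Ric(T)_{\alpha\bar\beta}=\nabla_\gamma\nabla_{\bar\mu}T_{\alpha\bar\beta\mu\bar\delta}$ and reaches $\nabla_{\bar\mu}\nabla_\mu T_{\alpha\bar\beta\gamma\bar\delta}$ plus four commutator terms. It then adds $\frac12[\nabla_\mu,\nabla_{\bar\mu}]T$ to form $\Delta T$, expands $\Delta|T_{\alpha\bar\beta\gamma\bar\delta}|^2$ as $2(\Delta T_{\alpha\bar\beta\gamma\bar\delta})T_{\beta\bar\alpha\delta\bar\gamma}$ plus the two gradient terms, and merges terms using the K\"ahler slot symmetries, exactly as you describe.
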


\begin{proof}
Using the second Bianchi identity \eqref{eq:2K-Bianchi g} and the commutator formula \eqref{eq:2commutator}, we calculate
\begin{eqnarray*}
\nabla_\gamma\nabla_{\bar\delta}\Ric(T)_{\alpha \bar \beta}
&=& \nabla_\gamma\nabla_{\bar\delta}T_{\alpha\bar\beta\mu\bar\mu} \\
&=&   \nabla_\gamma\nabla_{\bar\mu}T_{\alpha\bar\beta\mu\bar\delta} \\
&=& \nabla_{\bar\mu}\nabla_\gamma T_{\alpha\bar\beta\mu\bar\delta}
   - R_{\gamma\bar\mu\alpha\bar\nu}T_{\nu\bar\beta\mu\bar\delta}
   + R_{\gamma\bar\mu\nu\bar\beta}T_{\alpha\bar\nu\mu\bar\delta} \\
&&
   - R_{\gamma\bar\mu\mu\bar\nu}T_{\alpha\bar\beta\nu\bar\delta}
   + R_{\gamma\bar\mu\nu\bar\delta}T_{\alpha\bar\beta\mu\bar\nu} \\
&=& \nabla_{\bar\mu}\nabla_\mu T_{\alpha\bar\beta\gamma\bar\delta}
   - R_{\gamma\bar\mu\alpha\bar\nu}T_{\nu\bar\beta\mu\bar\delta}
   + R_{\gamma\bar\mu\nu\bar\beta}T_{\alpha\bar\nu\mu\bar\delta} \\
&&
   - R_{\gamma\bar\nu}T_{\alpha\bar\beta\nu\bar\delta}
   + R_{\nu\bar\mu\gamma\bar\delta}T_{\alpha\bar\beta\mu\bar\nu}.
\end{eqnarray*}
Also,
\begin{eqnarray*}
    \nabla_\mu\nabla_{\bar\mu}T_{\alpha\bar\beta\gamma\bar\delta}
&=& \nabla_{\bar\mu}\nabla_\mu T_{\alpha\bar\beta\gamma\bar\delta}
- R_{\mu\bar\mu\alpha\bar\nu}T_{\nu\bar\beta\gamma\bar\delta}
   + R_{\mu\bar\mu\nu\bar\beta}T_{\alpha\bar\nu\gamma\bar\delta} \\
   &&    - R_{\mu\bar\mu\gamma\bar\nu}T_{\alpha\bar\beta\nu\bar\delta}
   + R_{\mu\bar\mu\nu\bar\delta}T_{\alpha\bar\beta\gamma\bar\nu} \\
&=& \nabla_{\bar\mu}\nabla_\mu T_{\alpha\bar\beta\gamma\bar\delta} 
 - R_{\alpha\bar\nu}T_{\nu\bar\beta\gamma\bar\delta}
   + R_{\nu\bar\beta}T_{\alpha\bar\nu\gamma\bar\delta} \\
   &&  - R_{\gamma\bar\nu}T_{\alpha\bar\beta\nu\bar\delta}
   + R_{\nu\bar\delta}T_{\alpha\bar\beta\gamma\bar\nu}.
\end{eqnarray*}
Combining the formulas above yields
\begin{eqnarray*}
&&  \Delta T_{\alpha\bar\beta\gamma\bar\delta} := \frac12\left(\nabla_\mu\nabla_{\bar\mu}
      + \nabla_{\bar\mu}\nabla_\mu\right)
      T_{\alpha\bar\beta\gamma\bar\delta} \\
      &=& \nabla_{\bar\mu}\nabla_\mu T_{\alpha\bar\beta\gamma\bar\delta}
   - \frac12\left(
       R_{\alpha\bar\nu}T_{\nu\bar\beta\gamma\bar\delta}
       - R_{\nu\bar\beta}T_{\alpha\bar\nu\gamma\bar\delta} + R_{\gamma\bar\nu}T_{\alpha\bar\beta\nu\bar\delta}
       - R_{\nu\bar\delta}T_{\alpha\bar\beta\gamma\bar\nu}
     \right) \\
&=& \nabla_\gamma\nabla_{\bar\delta}\Ric(T)_{\alpha\bar\beta}
   + R_{\gamma\bar\mu\alpha\bar\nu}T_{\nu\bar\beta\mu\bar\delta}
   - R_{\gamma\bar\mu\nu\bar\beta}T_{\alpha\bar\nu\mu\bar\delta} 
   - R_{\nu\bar\mu\gamma\bar\delta}T_{\alpha\bar\beta\mu\bar\nu} \\
&& - \frac12\left(
       R_{\alpha\bar\nu}T_{\nu\bar\beta\gamma\bar\delta}
       - R_{\nu\bar\beta}T_{\alpha\bar\nu\gamma\bar\delta}
       - R_{\gamma\bar\nu}T_{\alpha\bar\beta\nu\bar\delta}
       - R_{\nu\bar\delta}T_{\alpha\bar\beta\gamma\bar\nu}
     \right),
\end{eqnarray*}
which holds on any K\"ahler manifold.

We then calculate
\begin{eqnarray*}
 \Delta |T_{\alpha\bar\beta\gamma\bar\delta}|^2 
&=& \nabla_\mu \nabla_{\bar{\mu}} (T_{\alpha\bar\beta\gamma\bar\delta} T_{\beta\bar\alpha\delta\bar\gamma} ) \\
&=& 2(\Delta T_{\alpha\bar\beta\gamma\bar\delta})T_{\beta\bar\alpha\delta\bar\gamma}
   + |\nabla_\mu T_{\alpha\bar\beta\gamma\bar\delta}|^2
   + |\nabla_{\bar\mu}T_{\alpha\bar\beta\gamma\bar\delta}|^2 \\
&=& 2 \nabla_\gamma\nabla_{\bar\delta}\Ric(T)_{\alpha\bar\beta} T_{\beta\bar\alpha\delta\bar\gamma} +2 R_{\gamma\bar\mu\alpha\bar\nu}T_{\nu\bar\beta\mu\bar\delta}
     T_{\beta\bar\alpha\delta\bar\gamma} 
- 2 R_{\gamma\bar\mu\nu\bar\beta}T_{\alpha\bar\nu\mu\bar\delta}
     T_{\beta\bar\alpha\delta\bar\gamma} \\
     && 
   - 2 R_{\nu\bar\mu\gamma\bar\delta}T_{\alpha\bar\beta\mu\bar\nu}
     T_{\beta\bar\alpha\delta\bar\gamma} \\
     && - \left(
       R_{\alpha\bar\nu}T_{\nu\bar\beta\gamma\bar\delta}
       - R_{\nu\bar\beta}T_{\alpha\bar\nu\gamma\bar\delta}
       - R_{\gamma\bar\nu}T_{\alpha\bar\beta\nu\bar\delta}
       - R_{\nu\bar\delta}T_{\alpha\bar\beta\gamma\bar\nu}
     \right)T_{\beta\bar\alpha\delta\bar\gamma}  \\
     &&  + |\nabla_\mu T_{\alpha\bar\beta\gamma\bar\delta}|^2
   + |\nabla_{\bar\mu}T_{\alpha\bar\beta\gamma\bar\delta}|^2 \\ 
&=& 2 \nabla_\gamma\nabla_{\bar\delta}\Ric(T)_{\alpha\bar\beta} T_{\beta\bar\alpha\delta\bar\gamma} + 2 R_{\gamma\bar\mu\alpha\bar\nu}T_{\nu\bar\beta\mu\bar\delta}
     T_{\beta\bar\alpha\delta\bar\gamma}
- 4 R_{\gamma\bar\mu\nu\bar\beta}T_{\alpha\bar\nu\mu\bar\delta}
     T_{\beta\bar\alpha\delta\bar\gamma}  \\
     && +2 R_{\nu \bar\beta} T_{\alpha\bar\nu\gamma\bar\delta}T_{\beta\bar\alpha\delta\bar\gamma}  + |\nabla_\mu T_{\alpha\bar\beta\gamma\bar\delta}|^2
   + |\nabla_{\bar\mu}T_{\alpha\bar\beta\gamma\bar\delta}|^2. 
\end{eqnarray*}
\end{proof}

We now relate the curvature terms to the Calabi curvature operator. 

\begin{proposition}\label{prop_Eigenvalue_Term}
Suppose that $\{S_A\}_{A=1}^{N},$ $N=\frac{n(n+1)}{2},$ is a unitary basis for $S^{2,0}$, and $R$ is induced from the algebraic Calabi operator $C$ defined by
\[
C(S_A)=\sigma_A S_A.
\]
Then for any algebraic Kähler curvature tensor $T$,  
\begin{align*}
\sum_{A=1}^{N}\sigma_A|S_AT|^2
= 8 R_{\nu \bar\beta} T_{\alpha\bar\nu\gamma\bar\delta}T_{\beta\bar\alpha\delta\bar\gamma}
- 8 R_{\nu\bar\mu\gamma\bar\delta}T_{\alpha\bar\beta\mu\bar\nu}
     T_{\beta\bar\alpha\delta\bar\gamma}.
\end{align*}
\end{proposition}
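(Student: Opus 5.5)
The plan is to expand $\sum_A \sigma_A |S_A T|^2$ as a quadratic form in the $S_A$, using Lemma \ref{ST_computation}, and then to eliminate the eigenbasis by a completeness relation.

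\textbf{Step 1 (norm in components).} Write $S_A=(S_A)_{\mu\nu}Z_\mu\otimes Z_\nu$ with symmetric coefficients. By Lemma \ref{ST_computation} and the argument of Lemma \ref{lemma |S_A B|^2}, $-(S_AT)$ has components of the form $(S_A)_{\bar\mu\bar\alpha}T_{\mu\bar\beta\gamma\bar\delta}-(S_A)_{\bar\mu\bar\beta}T_{\mu\bar\alpha\gamma\bar\delta}$, after moving indices into standard position. Hence
\[
|S_AT|^2=4\sum\Big|(S_A)_{\bar\mu\bar\alpha}T_{\mu\bar\beta\gamma\bar\delta}-(S_A)_{\bar\mu\bar\beta}T_{\mu\bar\alpha\gamma\bar\delta}\Big|^2 .
\]
Expanding this gives two diagonal terms, which are equal by symmetry, and two cross terms. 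Each piece is bilinear in $(S_A)_{\bar\mu\bar\alpha}$ and $\overline{(S_A)_{\bar\nu\bar\rho}}$.

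\textbf{Step 2 (eliminate the eigenbasis).} Weighting by $\sigma_A$ and summing over $A$ replaces
\[
\sum_A\sigma_A (S_A)_{\bar\mu\bar\alpha}\overline{(S_A)_{\bar\nu\bar\rho}}
\]
by the kernel of $C$ on $S^{2,0}$. Via the defining relation $\langle C(Z_\alpha\odot Z_\gamma),\overline{Z_\beta\odot Z_\delta}\rangle=4R_{\alpha\bar\beta\gamma\bar\delta}$, this kernel is a fixed multiple of $R_{\mu\bar\nu\alpha\bar\rho}$ (up to index placement). Because $R_{\mu\bar\nu\alpha\bar\rho}$ is already symmetric in $\mu,\alpha$ and in $\nu,\rho$, no symmetrization correction arises. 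The main bookkeeping issue is the normalization: $|Z_\alpha\odot Z_\gamma|^2=2$ or $4$ depending on whether $\alpha\neq\gamma$ or $\alpha=\gamma$. I would fix the constant by testing on $C=2\,\mathrm{id}$, i.e.\ $R=I$ and $\sum_A S_A\otimes\bar S_A$ equal to the projection onto symmetric tensors.

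\textbf{Step 3 (contract and identify).}
\begin{itemize}
\item The diagonal terms contract to expressions of the form $R_{\mu\bar\nu\alpha\bar\alpha}T_{\nu\bar\beta\gamma\bar\delta}\overline{T_{\mu\bar\beta\gamma\bar\delta}}$. Since $R$ is Kähler, the trace $R_{\mu\bar\nu\alpha\bar\alpha}$ equals $R_{\mu\bar\nu}$. Using $\overline{T_{\alpha\bar\beta\gamma\bar\delta}}=T_{\beta\bar\alpha\delta\bar\gamma}$, this yields the Ricci term $R_{\nu\bar\beta}T_{\alpha\bar\nu\gamma\bar\delta}T_{\beta\bar\alpha\delta\bar\gamma}$.
\item The cross terms give $R_{\mu\bar\nu\alpha\bar\beta}T_{\mu\bar\beta\gamma\bar\delta}\overline{T_{\nu\bar\alpha\gamma\bar\delta}}$. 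Using the Kähler symmetries of $T$ (interchanging holomorphic slots, and the pair swap), this rewrites as $R_{\nu\bar\mu\gamma\bar\delta}T_{\alpha\bar\beta\mu\bar\nu}T_{\beta\bar\alpha\delta\bar\gamma}$ with a minus sign coming from the cross term.
\end{itemize}
Collecting the constants should produce $8$ and $-8$.

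\textbf{Main obstacle.} The hard part is tracking the constants: the factor 4 in the norm, the $\odot$ normalization, the factor 2 from the two diagonal terms, and possible conjugation conventions in the action. I would verify the final constants on $T=R=I$ for $\mathbb{CP}^n$, where both sides can be computed explicitly.
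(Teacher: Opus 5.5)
Your plan is essentially the paper's proof: expand $|S_AT|^2$ via the norm lemma, use the spectral identity $\sum_A\sigma_A s^A_{\bar\mu\bar\alpha}\overline{s^A_{\bar\nu\bar\beta}}=R_{\nu\bar\mu\beta\bar\alpha}$ (constant $1$ for the tensor inner product, which your test on $C=2\,\id$ confirms), and note that the two diagonal terms coincide after swapping $\alpha\leftrightarrow\beta$, as do the two cross terms, which gives $8$ and $-8$. One slip to correct: the unconjugated $S_A$-indices land in the barred slots of $R$, so the cross term is $R_{\nu\bar\mu\beta\bar\alpha}T_{\mu\bar\beta\gamma\bar\delta}\overline{T_{\nu\bar\alpha\gamma\bar\delta}}$ rather than your $R_{\mu\bar\nu\alpha\bar\beta}(\cdots)$, which pairs unbarred with unbarred indices; matching the corrected term with $R_{\nu\bar\mu\gamma\bar\delta}T_{\alpha\bar\beta\mu\bar\nu}T_{\beta\bar\alpha\delta\bar\gamma}$ also requires the symmetry $R_{\nu\bar\mu\beta\bar\alpha}=R_{\beta\bar\mu\nu\bar\alpha}$, not only the symmetries of $T$.
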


\begin{proof}
Fix a unitary frame
$\{Z_\alpha\}_{\alpha=1}^{n}$ of $T^{1,0}M$, and identify
$S\in S^{2,0}$ with the map
\[
S:T^{0,1}M\longrightarrow T^{1,0}M
\]
obtained by raising an index. Write
\[
S_A(\overline Z_\alpha)=s^A_{\bar \mu \bar \alpha}Z_\mu,
\qquad
s^A_{\bar \mu \bar \alpha}=s^A_{\bar \alpha \bar \mu}.
\]

Since by definition $C_R=\sum\sigma_A S_A\otimes \bar S_A$, we have

\begin{equation*}
\sum_A
\sigma_A
s^A_{ \bar \mu \bar \alpha}
\overline{s^A_{ \bar \nu \bar \beta}}
=
R_{\nu\bar\mu\beta\bar\alpha}
\end{equation*}

By Lemma \ref{lemma |S_A B|^2}, applied to $T$, we have
\begin{equation*}
|S_AT|^2
=
4 | 
s^A_{\bar \mu \bar \alpha}T_{\mu\bar\beta\gamma\bar\delta}
-
s^A_{\bar \mu \bar \beta}T_{\mu\bar\alpha\gamma\bar\delta}
|^2
\end{equation*}
and hence
\begin{align*}
\sum_A\sigma_A|S_AT|^2
={}&
4\sum_A\sigma_A
\Bigl[
s^A_{ \bar \mu \bar \alpha}\overline{s^A_{ \bar \nu \bar \alpha}}\,
T_{\mu\bar\beta\gamma\bar\delta}
\overline{T_{\nu\bar\beta\gamma\bar\delta}}
-
s^A_{ \bar \mu \bar \alpha}\overline{s^A_{ \bar \nu \bar \beta}}\,
T_{\mu\bar\beta\gamma\bar\delta}
\overline{T_{\nu\bar\alpha\gamma\bar\delta}}
\\
&\hspace{13mm}
-
s^A_{ \bar \mu \bar \beta}\overline{s^A_{ \bar \nu \bar \alpha}}\,
T_{\mu\bar\alpha\gamma\bar\delta}
\overline{T_{\nu\bar\beta\gamma\bar\delta}}
+
s^A_{ \bar \mu \bar \beta}\overline{s^A_{ \bar \nu \bar \beta}}\,
T_{\mu\bar\alpha\gamma\bar\delta}
\overline{T_{\nu\bar\alpha\gamma\bar\delta}}
\Bigr] \\
= {} & 8\sum_A\sigma_A
s^A_{ \bar \mu \bar \alpha}\overline{s^A_{ \bar \nu \bar \alpha}}\,
T_{\mu\bar\beta\gamma\bar\delta}
\overline{T_{\nu\bar\beta\gamma\bar\delta}}
-
8\sum_A\sigma_A
s^A_{ \bar \mu \bar \alpha}\overline{s^A_{ \bar \nu \bar \beta}}\,
T_{\mu\bar\beta\gamma\bar\delta}
\overline{T_{\nu\bar\alpha\gamma\bar\delta}}
\\
= {} & 8  R_{\nu\bar\mu\alpha\bar\alpha}
T_{\mu\bar\beta\gamma\bar\delta}
\overline{T_{\nu\bar\beta\gamma\bar\delta}} - 8 R_{\nu \bar \mu \beta \bar \alpha} T_{\mu\bar\beta\gamma\bar\delta}
\overline{T_{\nu\bar\alpha\gamma\bar\delta}} \\
= {} & 8  R_{\nu\bar\mu}
T_{\mu\bar\beta\gamma\bar\delta}
\overline{T_{\nu\bar\beta\gamma\bar\delta}} - 8 R_{\nu\bar\mu\beta\bar\alpha}
T_{\mu\bar\beta\gamma\bar\delta}
\overline{T_{\nu\bar\alpha\gamma\bar\delta}},
\end{align*}
which yields the claim after using the K\"ahler symmetries of $T$ and relabeling the indices.
\end{proof}

Therefore,
\begin{proposition}\label{General_Bochner_Formula} Every smooth (not necessarily harmonic) K\"ahler curvature tensor field $T$ satisfying the K\"ahler second Bianchi identity satisfies
\begin{align*}
\frac{1}{8}\Ric_L(T, \bar T)=& \ \Delta |T_{\alpha\bar\beta\gamma\bar\delta}|^2 - 2 \nabla_\gamma \nabla_{\bar \delta} \Ric(T)_{\alpha \bar \beta}T_{\beta\bar\alpha\delta\bar\gamma} -|\nabla_\mu T_{\alpha\bar\beta\gamma\bar\delta}|^2
   - |\nabla_{\bar\mu}T_{\alpha\bar\beta\gamma\bar\delta}|^2 \\
= & \ 2 R_{\alpha\bar\mu\gamma\bar\nu}T_{\nu\bar\beta\mu\bar\delta}
     T_{\beta\bar\alpha\delta\bar\gamma}  
   - 4R_{\nu\bar\mu\gamma\bar\delta}T_{\alpha\bar\beta\mu\bar\nu}
     T_{\beta\bar\alpha\delta\bar\gamma}  +2 R_{\nu \bar\beta} T_{\alpha\bar\nu\gamma\bar\delta}T_{\beta\bar\alpha\delta\bar\gamma}  \\
   = & \ 2 \tr (C_R \circ C_T \circ C_T) +\frac12 \sum_A \sigma_A |S_A T|^2 -2 R_{\nu \bar\beta} T_{\alpha\bar\nu\gamma\bar\delta}T_{\beta\bar\alpha\delta\bar\gamma}. 
\end{align*}  
Here $C_T$ denotes the Calabi curvature operator induced by $T$. 
\end{proposition}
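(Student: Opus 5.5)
The statement has three lines, and I would prove the two equalities separately.

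\textbf{First equality.} This is essentially a definition combined with Proposition~\ref{General_Second_Derviative}. In complex notation, $\frac18\Ric_L(T,\bar T)$ is defined as what remains of $\Delta|T_{\alpha\bar\beta\gamma\bar\delta}|^2$ after removing the gradient terms. For a tensor that is not harmonic, one must also remove the term $2\nabla_\gamma\nabla_{\bar\delta}\Ric(T)_{\alpha\bar\beta}T_{\beta\bar\alpha\delta\bar\gamma}$. For a curvature-type tensor satisfying the second Bianchi identity, this term encodes $\langle\Delta_L T - \nabla^*\nabla T, T\rangle$ minus the Lichnerowicz term. Subtracting it, the display of Proposition~\ref{General_Second_Derviative} gives exactly the second line. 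The only thing to check is that the normalization $\frac18$ agrees with the real convention \eqref{realbochner}, using $|T|^2=4|T_{\alpha\bar\beta\gamma\bar\delta}|^2$ and $\Delta^\C=\frac12\Delta^\R$.

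\textbf{Second equality.} This is purely algebraic, and I would handle it term by term.

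First, apply Proposition~\ref{prop_Eigenvalue_Term}:
\[
\tfrac12\sum_A\sigma_A|S_AT|^2 = 4R_{\nu\bar\beta}T_{\alpha\bar\nu\gamma\bar\delta}T_{\beta\bar\alpha\delta\bar\gamma} - 4R_{\nu\bar\mu\gamma\bar\delta}T_{\alpha\bar\beta\mu\bar\nu}T_{\beta\bar\alpha\delta\bar\gamma}.
\]
Subtracting $2R_{\nu\bar\beta}T_{\alpha\bar\nu\gamma\bar\delta}T_{\beta\bar\alpha\delta\bar\gamma}$ from this yields the Ricci term $+2R_{\nu\bar\beta}(\cdots)$ together with $-4R_{\nu\bar\mu\gamma\bar\delta}(\cdots)$. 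These are exactly the last two terms of the second line.

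It therefore remains to identify
\[
2R_{\alpha\bar\mu\gamma\bar\nu}T_{\nu\bar\beta\mu\bar\delta}T_{\beta\bar\alpha\delta\bar\gamma}=2\tr(C_R\circ C_T\circ C_T).
\]
To do this, I would write each operator in the basis $\{Z_\alpha\odot Z_\gamma\}$ using $C(h)_{\bar\beta\bar\delta}=R_{\alpha\bar\beta\gamma\bar\delta}h_{\bar\alpha\bar\gamma}$. The composite $C_R C_T C_T$ acting on $h$ is
\[
R_{\cdot}\,T_{\cdot}\,T_{\cdot}\,h
\]
with consecutive index pairs contracted. Taking the trace over $S^{2,0}$ means contracting against a unitary basis of symmetric tensors, which amounts to applying the projection $\frac12(\delta\delta+\delta\delta)$ onto symmetric pairs. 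By the symmetry of each tensor in its two unbarred (and its two barred) slots, the symmetrization is absorbed. The trace then becomes a full index contraction of the form $R_{a\bar b c\bar d}T_{b\bar e d\bar f}T_{e\bar a f\bar c}$. Using $\overline{T_{\alpha\bar\beta\gamma\bar\delta}}=T_{\beta\bar\alpha\delta\bar\gamma}$ and the Kähler symmetries, this can be relabeled into the displayed cubic contraction.

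\textbf{Main obstacle.} I expect the hardest part to be keeping the constants correct. There are three sources of ambiguity: the factor convention in $\langle C(Z_\alpha\odot Z_\gamma),\overline{Z_\beta\odot Z_\delta}\rangle=4R$, the fact that $|Z_\alpha\odot Z_\gamma|^2$ differs for $\alpha=\gamma$ versus $\alpha\ne\gamma$, and whether the trace picks up a factor $1$ or $2$ per composition.

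To resolve these, I would calibrate on $\mathbb{CP}^n$, where $R=T=I$ and $C=2\id$. There $\tr(C^3)=8\cdot\frac{n(n+1)}2$, while the cubic contraction $I_{\alpha\bar\mu\gamma\bar\nu}I_{\nu\bar\beta\mu\bar\delta}I_{\beta\bar\alpha\delta\bar\gamma}$ can be computed directly. Matching the two fixes the constant in front of $\tr(C_RC_TC_T)$. The same check, run on the Ricci and $\sigma_A$ terms, confirms the remaining coefficients.
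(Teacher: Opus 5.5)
Your proposal is correct and is essentially the paper's argument: the first equality is the definition of $\Ric_L$ combined with Proposition~\ref{General_Second_Derviative}, and the second is Proposition~\ref{prop_Eigenvalue_Term} together with the identification $R_{\alpha\bar\mu\gamma\bar\nu}T_{\nu\bar\beta\mu\bar\delta}T_{\beta\bar\alpha\delta\bar\gamma}=\tr(C_R\circ C_T\circ C_T)$. The paper leaves that identification implicit, and you supply it; it holds with constant exactly $1$, because each $C_X$ annihilates $\Lambda^{2,0}$, so the trace can be computed over all of $T^{1,0}\otimes T^{1,0}$ with matrix entries $X_{\alpha\bar\beta\gamma\bar\delta}$ (this matches your $\mathbb{CP}^n$ check $\tr(C^3)=4n(n+1)$). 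One slip, which does not matter because that equality is definitional: your gloss on the subtracted term is wrong, since $2\nabla_\gamma\nabla_{\bar\delta}\Ric(T)_{\alpha\bar\beta}T_{\beta\bar\alpha\delta\bar\gamma}$ plays the role of $\langle\Delta_L T,T\rangle$ up to normalization and vanishes when $\nabla\Ric(T)=0$, whereas ``$\langle\Delta_LT-\nabla^*\nabla T,T\rangle$ minus the Lichnerowicz term'' is identically zero.
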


\begin{proof}
The second equality is the statement of Proposition \ref{General_Second_Derviative}, and the third holds by Proposition \ref{prop_Eigenvalue_Term}. The divergence terms in the first line are the necessary adjustments to the definition of $\Ric_L$ when $T$ is not assumed to be harmonic. Note that we have seen the right side is a $0$-th order curvature term and that it agrees with the definition given earlier for harmonic $T$, since in that case the divergence terms vanish.
\end{proof}

As corollary, we get

\begin{proof}[Proof of Theorem \ref{prop:mainbochner}]
    Apply Proposition \ref{General_Bochner_Formula} to $T=R$ and use $Ric=\lambda g$.
\end{proof}

\section{Proof of the Main Theorem}

To apply the weight principle of \cite[Section 3]{NPWBettiNumbersAndCOSK}, we need to find the total weight $\sum_{A} |S_A B |^2$ and estimate the highest weight $|S_A B|^2$ from above.

\begin{proposition}\label{prop 4.1 total weight}
Let $T$ be an algebraic K\"ahler curvature tensor and let
\(\{S_A\}_{A=1}^N\) be a unitary basis of \(S^{2,0}\). Then
\begin{equation*}
\sum_{A} |S_A T |^2    =  4 n |T_{\alpha \bar \beta \gamma \bar \delta}|^2 - 4 |T_{\alpha \bar \beta}|^2.
\end{equation*}
\end{proposition}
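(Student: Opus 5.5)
The plan is to exploit the fact that the sum $\sum_A |S_AT|^2$ is independent of the unitary basis chosen. This lets us replace $\sigma_A$ by $1$ in the computation from the proof of Proposition \ref{prop_Eigenvalue_Term}, since that computation never used anything about the $\sigma_A$ beyond the identity $\sum_A \sigma_A s^A_{\bar\mu\bar\alpha}\overline{s^A_{\bar\nu\bar\beta}} = R_{\nu\bar\mu\beta\bar\alpha}$. With $\sigma_A\equiv 1$, the right-hand side becomes the coefficient tensor of the identity operator on $S^{2,0}$, which is the completeness relation for a unitary basis of symmetric tensors.

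The first step is to write $S^{2,0}\ni S=\sum_A \langle S,\bar S_A\rangle S_A$ and evaluate it on the basis elements $Z_\mu\odot Z_\alpha$. Using the normalization in Section \ref{DefinitionCalabi}, this should give
\begin{equation*}
\sum_A s^A_{\bar\mu\bar\alpha}\overline{s^A_{\bar\nu\bar\beta}} = c\,(g_{\nu\bar\mu}g_{\beta\bar\alpha}+g_{\nu\bar\alpha}g_{\beta\bar\mu}).
\end{equation*}
The constant $c$ must be fixed consistently with the convention under which $\sum_A\sigma_A s^A\overline{s^A}=R$. Comparing with the $\mathbb{CP}^n$ example, where $C=2\,\id$ and $R=I$, we have $\sum_A 2\, s^A\overline{s^A}=I$, so $c=\tfrac12$. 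In other words, the identity operator corresponds to the "curvature tensor" $\tfrac12 I$. Its Ricci tensor is $\tfrac12 I_{\nu\bar\mu\alpha\bar\alpha}=\tfrac{n+1}{2}g_{\nu\bar\mu}$.

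The second step is to substitute $R\mapsto \tfrac12 I$ into the final line of the proof of Proposition \ref{prop_Eigenvalue_Term}. This gives
\begin{equation*}
\sum_A|S_AT|^2 = 4(n+1)\,|T_{\mu\bar\beta\gamma\bar\delta}|^2 - 4\,(g_{\nu\bar\mu}g_{\beta\bar\alpha}+g_{\nu\bar\alpha}g_{\beta\bar\mu})\,T_{\mu\bar\beta\gamma\bar\delta}\overline{T_{\nu\bar\alpha\gamma\bar\delta}}.
\end{equation*}
The two contractions in the last term are evaluated separately:
\begin{itemize}
\item The $g_{\nu\bar\mu}g_{\beta\bar\alpha}$ term sets $\nu=\mu$ and $\alpha=\beta$, giving $|T_{\mu\bar\beta\gamma\bar\delta}|^2$.
\item The $g_{\nu\bar\alpha}g_{\beta\bar\mu}$ term sets $\nu=\alpha$ and $\beta=\mu$, giving $T_{\mu\bar\mu\gamma\bar\delta}\overline{T_{\alpha\bar\alpha\gamma\bar\delta}}=|T_{\gamma\bar\delta}|^2$, the squared norm of $\Ric(T)$.
\end{itemize}
Collecting terms yields $4n|T_{\alpha\bar\beta\gamma\bar\delta}|^2-4|T_{\alpha\bar\beta}|^2$, as claimed.

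The main obstacle is bookkeeping the normalization constant $c$, i.e.\ whether the unitary basis is normalized with respect to the inner product in which $\langle Z_\alpha\odot Z_\gamma,\overline{Z_\beta\odot Z_\delta}\rangle$ carries the factors $2$ or $4$. I would avoid this subtlety by calibrating against the $\mathbb{CP}^n$ example, as above, rather than computing the norms directly. As a sanity check, I would verify the formula on $T=I$ with $S=Z_1\otimes Z_1$ using \eqref{NormST}.
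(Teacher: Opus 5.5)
Your proposal is correct and is essentially the paper's proof. The paper applies Proposition \ref{prop_Eigenvalue_Term} with $R=I$, $C=2\id$ and $\sigma_A\equiv 2$ (every unitary basis is then an eigenbasis), and divides by $2$. That is your substitution $R\mapsto \tfrac12 I$, $\sigma_A\equiv 1$, followed by the same two contractions producing $|T_{\alpha\bar\beta\gamma\bar\delta}|^2$ and $|T_{\alpha\bar\beta}|^2$; your basis-independence remark is therefore harmless but unnecessary.
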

\begin{proof}
We apply Proposition \ref{prop_Eigenvalue_Term} with $R=I$. In this case, we have $C_R=2\operatorname{id}_{S^{2,0}}$, $\sigma_A=2$ for all $A$, and  $R_{\alpha \bar \beta}=(n+1)g_{\alpha \bar \beta}$. Hence, 
\begin{align*}
     I_{\xi \bar\mu \eta \bar \nu} T_{\nu \bar \eta \gamma \bar \delta} T_{\mu \bar \xi \delta \bar \gamma} = (g_{\xi \bar \mu} g_{\eta \bar \nu} +g_{\xi \bar \nu}g_{\eta \bar \mu})T_{\nu \bar \eta \gamma \bar \delta} T_{\mu \bar \xi \delta \bar \gamma} =|T_{\xi \bar \xi \gamma \bar \delta}|^2 + |T_{\alpha \bar \beta \gamma \bar \delta}|^2
\end{align*} 
 and Proposition \ref{prop_Eigenvalue_Term} shows
\begin{align*}
  \sum_A |S_A T|^2=4(n+1)|T_{\alpha \bar \beta \gamma \bar \delta}|^2-4|T_{\alpha \bar \beta \gamma \bar \delta}|^2- 4 |T_{\gamma \bar \delta}|^2=4n|T_{\alpha \bar \beta \gamma \bar \delta}|^2 - 4 |T_{\gamma \bar \delta}|^2.  
\end{align*}
\end{proof}

Since the terms in Theorem \ref{prop:mainbochner} do not individually vanish on the identity component, it will be convenient for the proof of the main theorem to pass to the point of view of the traceless part only. 

Let $C_B=C-\frac{2\lambda}{n+1} \id_{S^{2,0}}$ be the traceless part of the Calabi curvature operator (or equivalently, since $C$ is Einstein, Calabi operator induced by the Bochner tensor $B$). The eigenvalues of $C_B$ are given by 
\begin{equation*}
    \tau_A =\sigma_A -\frac{2\lambda}{n+1},
\end{equation*} 
where $\sigma_A$ are the corresponding eigenvalues of $C$. We have 
\begin{equation*}
    \sum_A \tau_A =0
\end{equation*}
and 
\begin{equation*}
    \sum_A \tau_A^2 = |B_{\alpha \bar \beta \gamma\bar\delta}|^2 = \frac{1}{4}|B|^2.
\end{equation*}

We can then rewrite the description of $\Ric_L$ given in Remark \ref{rem:traceremark} as
\begin{align}
\frac{1}{8}\Ric_L(R, \bar R) = & \ \frac{1}{2}\sum_{A} \left(\tau_A +\frac{2\lambda}{n+1}\right) |S_A B |^2 + 2 \sum_A \left(\tau_A +\frac{2\lambda}{n+1}\right)^3 \notag \\
& \ -\frac{2(n+5)}{n+1}\lambda  \sum_A \left(\tau_A +\frac{2\lambda}{n+1}\right)^2+\frac{4n(n+3)}{(n+1)^2} \lambda^3 \notag \\
=& \ \frac{1}{2}\sum_{A} \tau_A  |S_A B |^2 + 2 \sum_A \tau_A^3  + 2 \lambda \sum_A \tau_A^2. \label{RicLinTraceFreeEigenvalues}
\end{align}

This immediately gives us a weak version of Theorem \ref{maintheorem}:

\begin{proposition}
If $C - \frac{\lambda}{n+1} \id_{S^{2,0}}$ is nonnegative, then $\Ric_L(R, \bar R)\geq 0$. 
\end{proposition}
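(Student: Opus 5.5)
The plan is to work directly with the trace-free expression \eqref{RicLinTraceFreeEigenvalues}. I would absorb the half-trace shift into the eigenvalues and show that what remains is a sum of manifestly nonnegative terms. Set
\begin{equation*}
\mu_A := \sigma_A - \frac{\lambda}{n+1} = \tau_A + \frac{\lambda}{n+1}.
\end{equation*}
These are the eigenvalues of $C - \frac{\lambda}{n+1}\id_{S^{2,0}}$, so by hypothesis $\mu_A \geq 0$ for all $A$. The idea is to substitute $\tau_A = \mu_A - \frac{\lambda}{n+1}$ only in the weighted term $\frac12\sum_A \tau_A |S_A B|^2$, and to leave the cubic and quadratic terms in $\tau_A$.

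The first step is to evaluate the total weight $\sum_A |S_A B|^2$ using Proposition \ref{prop 4.1 total weight} with $T = B$. The Bochner tensor is Ricci-free, so $B_{\alpha\bar\beta} = 0$. Together with $|B_{\alpha\bar\beta\gamma\bar\delta}|^2 = \sum_A \tau_A^2$, this gives
\begin{equation*}
\sum_A |S_A B|^2 = 4n \sum_A \tau_A^2 .
\end{equation*}
Consequently
\begin{equation*}
\frac12\sum_A \tau_A |S_A B|^2 = \frac12 \sum_A \mu_A |S_A B|^2 - \frac{2n\lambda}{n+1}\sum_A \tau_A^2 .
\end{equation*}

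The second step is to insert this into \eqref{RicLinTraceFreeEigenvalues} and collect the quadratic terms. The coefficient of $\sum_A\tau_A^2$ becomes $2\lambda - \frac{2n\lambda}{n+1} = \frac{2\lambda}{n+1}$. Hence
\begin{equation*}
\frac18 \Ric_L(R,\bar R) = \frac12\sum_A \mu_A |S_A B|^2 + 2\sum_A \tau_A^2\Bigl(\tau_A + \frac{\lambda}{n+1}\Bigr) = \frac12\sum_A \mu_A |S_A B|^2 + 2\sum_A \mu_A \tau_A^2 .
\end{equation*}
Both sums are nonnegative because $\mu_A \geq 0$, which proves the claim. This argument does not even use the sign of $\lambda$.

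The only delicate point is the bookkeeping in the first step. One has to use the exact total-weight identity, including the vanishing of the Ricci contraction of $B$ and the normalization $|B_{\alpha\bar\beta\gamma\bar\delta}|^2=\sum_A\tau_A^2$. It is exactly this identity that converts the negative part of the weighted sum into the term $-\frac{2n\lambda}{n+1}\sum_A\tau_A^2$, which then combines with the cubic term into $\sum_A \mu_A\tau_A^2$.
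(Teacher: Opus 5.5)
Your proof is correct. You substitute $\tau_A=\mu_A-\frac{\lambda}{n+1}$ only in the weighted term and use $\sum_A|S_AB|^2=4n\sum_A\tau_A^2$, which follows from Proposition \ref{prop 4.1 total weight} with $B_{\alpha\bar\beta}=0$. This gives the exact identity
\begin{equation*}
\frac14\Ric_L(R,\bar R)=\sum_A \mu_A\bigl(|S_AB|^2+4\tau_A^2\bigr).
\end{equation*}

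The paper proves this proposition by a slightly cruder route. It bounds every $\tau_A$ below by $\tau_{\min}$, in both the weighted sum and the cubic sum of \eqref{RicLinTraceFreeEigenvalues}, and only then applies the total-weight identity. This yields
\begin{equation*}
\frac18\Ric_L(R,\bar R)\ \ge\ 2\bigl((n+1)\tau_{\min}+\lambda\bigr)|B_{\alpha\bar\beta\gamma\bar\delta}|^2,
\end{equation*}
which is nonnegative because $(n+1)\tau_{\min}+\lambda=(n+1)\mu_{\min}\ge 0$.

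Your exact representation is the identity the paper derives only later, in the proof of Lemma \ref{maximizationproblem}. There it writes $\Ric_L$ as a weighted sum of eigenvalues of $C-\frac{\lambda}{n+1}\id$ with nonnegative weights of total mass $(n+1)|B|^2$. Your route is therefore the more informative one. It recovers the paper's quantitative lower bound by applying $\mu_A\ge\mu_{\min}$ and the same total weight. It is also already in the form the weight principle needs to relax nonnegativity to $k$-nonnegativity. The paper's $\tau_{\min}$ estimate is a one-line inequality that suffices for this weak statement but discards that structure.
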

\begin{proof}
Denote by $\tau_{\min}=\min\{\tau_A\}$ the smallest eigenvalue of $C_B$. Then \eqref{RicLinTraceFreeEigenvalues} implies
\begin{align*}
\frac{1}{8}\Ric_L(R, \bar R) 
\geq \ & \frac{1}{2} \tau_{\min} \sum_{A} |S_A B |^2 + 2 \tau_{\min} \sum_A \tau_A^2 +2 \lambda \sum_A \tau_A^2 \\
= \ & 2\left( (n +1)\tau_{\min}  +\lambda \right) |B_{\alpha \bar \beta \gamma \bar \delta}|^2 \geq 0. 
\end{align*}
\end{proof}

The main work will be in relaxing the strict requirement of nonnegativity. The framework for this was developed in \cite{NPWBettiNumbersAndCOSK}. Explicitly, we are reduced to the following optimization problem:

\begin{lemma}\label{maximizationproblem}
   Set 
   \begin{align*}
       c=\max\limits_{|S|=1}\frac{|SB|^2+4|B(S)|^2}{|B_{}|^2}.
   \end{align*}
    If $C-\frac{\lambda}{n+1}$ is $\frac{(n+1)}{c}$-nonnegative, then $\Ric_L(R, \bar R)\ge 0.$
\end{lemma}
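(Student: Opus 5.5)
The plan is to start from \eqref{RicLinTraceFreeEigenvalues} and rewrite the whole curvature term as a single weighted sum of the eigenvalues of $C-\frac{\lambda}{n+1}\id$, so that the weight principle of \cite[Section 3]{NPWBettiNumbersAndCOSK} applies. Put
\[
\mu_A=\tau_A+\tfrac{\lambda}{n+1};
\]
these are the eigenvalues of $C-\frac{\lambda}{n+1}\id_{S^{2,0}}$. The cubic term is $2\sum_A\tau_A^3$. Since $C_B(S_A)=\tau_AS_A$, we have $\tau_A=\langle B(S_A),\overline{S_A}\rangle$ up to the normalization of the pairing, and therefore $\tau_A^2=|B(S_A)|^2$, where $B(S)$ denotes the Calabi operator of $B$ applied to $S$. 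The cubic term is thus $2\sum_A\tau_A|B(S_A)|^2$, and we get
\[
\tfrac18\Ric_L(R,\bar R)=\tfrac12\sum_A\tau_A\bigl(|S_AB|^2+4|B(S_A)|^2\bigr)+2\lambda\sum_A\tau_A^2 .
\]
Replace $\tau_A$ by $\mu_A-\frac{\lambda}{n+1}$. For the trace, Proposition \ref{prop 4.1 total weight} applied to $T=B$ (which is traceless) gives $\sum_A|S_AB|^2=4n|B_{\alpha\bar\beta\gamma\bar\delta}|^2$, and $\sum_A|B(S_A)|^2=\sum_A\tau_A^2=|B_{\alpha\bar\beta\gamma\bar\delta}|^2$. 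The total weight is therefore $\sum_A w_A=4(n+1)|B_{\alpha\bar\beta\gamma\bar\delta}|^2$, where $w_A=|S_AB|^2+4|B(S_A)|^2$. The $\lambda$-correction then contributes
\[
-\tfrac{\lambda}{2(n+1)}\cdot 4(n+1)|B_{\alpha\bar\beta\gamma\bar\delta}|^2+2\lambda|B_{\alpha\bar\beta\gamma\bar\delta}|^2=0 .
\]
So we obtain the clean identity $\tfrac18\Ric_L(R,\bar R)=\tfrac12\sum_A\mu_A w_A$. This is why the half-trace operator appears, and checking this cancellation exactly is the first step.

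Next we apply the weight principle. The weights satisfy $w_A\ge0$ and $\sum_Aw_A=4(n+1)|B_{\alpha\bar\beta\gamma\bar\delta}|^2$. By the definition of $c$, and because the basis is unitary with $|S_A|=1$, each weight is bounded by $w_A\le c|B|^2=4c|B_{\alpha\bar\beta\gamma\bar\delta}|^2$, using the convention $|B|^2=4|B_{\alpha\bar\beta\gamma\bar\delta}|^2$. The weight principle says the following. If $\mu_1\le\dots\le\mu_N$ and $0\le w_A\le w_{\max}$, then $\sum_A\mu_Aw_A\ge0$ whenever $\mu$ is $k$-nonnegative with $k=\sum w_A/w_{\max}$. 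The reason is that the minimum of the linear functional over the weight polytope is attained by putting maximal weight on the smallest eigenvalues. With our bounds this gives $k=\frac{4(n+1)|B_{\alpha\bar\beta\gamma\bar\delta}|^2}{4c|B_{\alpha\bar\beta\gamma\bar\delta}|^2}=\frac{n+1}{c}$. Since $k$-nonnegativity is monotone in $k$, any smaller upper bound on the weights only helps. The case $B=0$ is trivial.

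The main obstacle is bookkeeping: consistently matching the normalizations of $|B|^2$ versus $|B_{\alpha\bar\beta\gamma\bar\delta}|^2$, of $|B(S)|^2$ versus $\tau^2$, and of $|S_AB|^2$. The weights must combine into exactly $|S_AB|^2+4|B(S_A)|^2$ and the trace terms must cancel exactly. I would first verify $\tau_A^2=|B(S_A)|^2$ under the paper's pairing, for example on the quadric example. After that I would recheck that the $\lambda$-terms cancel; if there is a residual term, its sign (nonnegative, since $\lambda\ge0$) would still give the inequality.
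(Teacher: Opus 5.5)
Your proposal is correct and follows the paper's argument. Like the paper, you rewrite \eqref{RicLinTraceFreeEigenvalues} as a weighted sum $\sum_A \mu_A\bigl(|S_AB|^2+4|B(S_A)|^2\bigr)$ over the eigenvalues $\mu_A$ of $C-\frac{\lambda}{n+1}\id$, obtain the total weight $(n+1)|B|^2$ from Proposition \ref{prop 4.1 total weight}, bound the highest weight by $c|B|^2$, and apply the weight principle; the $\lambda$-terms cancel exactly, so your fallback about a residual term (and hence any use of $\lambda\ge 0$) is not needed.
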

\begin{proof} By \eqref{RicLinTraceFreeEigenvalues} we have 
\begin{align*}
\frac{1}{4}\Ric_L(R, \bar R) =& \ \sum_{A} (\sigma_A  - \frac{2\lambda}{n+1})|S_A B |^2 + 4 \sum_A (\sigma_A  - \frac{2\lambda}{n+1})^3 + 4 \lambda \sum_A (\sigma_A  - \frac{2\lambda}{n+1})^2 \\
= & \ \sum_{A} (\sigma_A  - \frac{\lambda}{n+1})|S_A B |^2 + 4 \sum_A (\sigma_A  - \frac{\lambda}{n+1})(\sigma_A  - \frac{2\lambda}{n+1})^2\\
=& \ \sum_{A} (\sigma_A  - \frac{\lambda}{n+1})(|S_A B |^2 +4(\sigma_A  - \frac{2\lambda}{n+1})^2). 
\end{align*}
This is a weighted sum of eigenvalues of $C-\frac{\lambda}{n+1}$. The total weight is
\begin{align*}
\Omega=\sum_{A}(|S_A B |^2 +4(\sigma_A  - \frac{2\lambda}{n+1})^2)= (n+1)|B|^2
\end{align*}
and the highest weight is
\begin{align*}
 \omega=\max\limits_A|S_A B |^2 +4(\sigma_A  - \frac{2\lambda}{n+1})^2 = \max\limits_A|S_A B |^2 +4|B(S_A)|^2. 
\end{align*}
By the weight principle \cite[Theorem 3.6]{NPWBettiNumbersAndCOSK}, $\Ric_L(R, \bar R)\ge 0$ if $C-\frac{\lambda}{n+1}$ is $\frac{\Omega}{\omega}$-nonnegative, but $\frac{\Omega}{\omega}\ge \frac{n+1}{c}$ by definition of $c$, so this is true by assumption.
\end{proof}

This leaves us with having to estimate the highest weight. The straightforward bound is the following:
 
\begin{lemma}\label{prop:8bound}
    $|S B |^2 \leq 8 |S|^2 |B_{\alpha \bar \beta \gamma \bar \delta}|^2.$
\end{lemma}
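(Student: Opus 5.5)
Since $S\mapsto SB$ is linear and the inequality is homogeneous in $S$, we can normalize $|S|=1$. By unitary invariance of all the norms involved, we may diagonalize $S$ in a unitary frame (Takagi factorization of a complex symmetric matrix): $S=\sum_\alpha \rho_\alpha Z_\alpha\otimes Z_\alpha$ with $\rho_\alpha\ge 0$. Here $|S|^2$ is a fixed multiple of $\sum_\alpha\rho_\alpha^2$, and we must track the factor coming from the paper's normalization of norms on $S^{2,0}$. Then formula \eqref{NormST} of Lemma \ref{lemma |S_A B|^2} applies with $T=B$:
\begin{align*}
|SB|^2=4\sum_{\alpha\beta\gamma\delta}\left|\rho_\alpha B_{\alpha\bar\beta\gamma\bar\delta}-\rho_\beta B_{\beta\bar\alpha\gamma\bar\delta}\right|^2 .
\end{align*}

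Next we apply the elementary inequality $|a-b|^2\le 2|a|^2+2|b|^2$ to each term. Relabelling $\alpha\leftrightarrow\beta$ in the second sum gives
\begin{align*}
|SB|^2\le 8\sum_{\alpha\beta\gamma\delta}\rho_\alpha^2|B_{\alpha\bar\beta\gamma\bar\delta}|^2+8\sum_{\alpha\beta\gamma\delta}\rho_\beta^2|B_{\beta\bar\alpha\gamma\bar\delta}|^2=16\sum_\alpha\rho_\alpha^2\sum_{\beta\gamma\delta}|B_{\alpha\bar\beta\gamma\bar\delta}|^2 .
\end{align*}
We then bound $\sum_\alpha\rho_\alpha^2 X_\alpha\le(\max_\alpha\rho_\alpha^2)\sum_\alpha X_\alpha\le(\sum_\alpha\rho_\alpha^2)\,|B_{\alpha\bar\beta\gamma\bar\delta}|^2$, where $X_\alpha=\sum_{\beta\gamma\delta}|B_{\alpha\bar\beta\gamma\bar\delta}|^2$.

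This gives $|SB|^2\le 16\,(\sum_\alpha\rho_\alpha^2)\,|B_{\alpha\bar\beta\gamma\bar\delta}|^2$. With the $\odot$-convention of Section \ref{DefinitionCalabi}, $|Z_\alpha\otimes Z_\alpha|$ versus the norm on $S^{2,0}$ carries a factor of $2$, since $Z_\alpha\odot Z_\alpha=2Z_\alpha\otimes Z_\alpha$. I therefore expect $|S|^2=2\sum_\alpha\rho_\alpha^2$, which turns $16$ into $8$ and yields the claim.

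The main obstacle is exactly this bookkeeping of normalizations: the factor $4$ in \eqref{NormST}, and how $|S|$ is measured compared with $\sum\rho_\alpha^2$. If the naive count gives $16|S|^2$ instead, I would sharpen the argument by exploiting the Kähler symmetry $B_{\alpha\bar\beta\gamma\bar\delta}=B_{\gamma\bar\beta\alpha\bar\delta}$ rather than discarding the cross terms. Expanding the square gives
\begin{align*}
\sum(\rho_\alpha^2+\rho_\beta^2)|B_{\alpha\bar\beta\gamma\bar\delta}|^2-2\,\mathrm{Re}\sum\rho_\alpha\rho_\beta B_{\alpha\bar\beta\gamma\bar\delta}\overline{B_{\beta\bar\alpha\gamma\bar\delta}} .
\end{align*}
I would then bound the cross term by Cauchy--Schwarz. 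Alternatively, I would use the tracelessness of $B$ to control the diagonal contributions $\alpha=\beta$, which vanish identically in the difference, before applying the max-eigenvalue bound.
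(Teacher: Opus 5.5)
There is a genuine gap. The chain you carry out only proves $|SB|^2\le 16|S|^2|B_{\alpha\bar\beta\gamma\bar\delta}|^2$, and the factor $2$ you hope to recover from normalization is not available.

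In the paper $|S|^2$ is the full tensor norm, so $S=\sum_\alpha\rho_\alpha Z_\alpha\otimes Z_\alpha$ has $|S|^2=\sum_\alpha\rho_\alpha^2$. Rewriting $Z_\alpha\otimes Z_\alpha=\frac12 Z_\alpha\odot Z_\alpha$ does not change the norm of $S$. This convention is forced by Proposition \ref{prop_Eigenvalue_Term} with $R=I$: a unitary basis satisfies $\sum_A s^A_{\bar\mu\bar\alpha}\overline{s^A_{\bar\nu\bar\beta}}=\frac12(\delta_{\mu\nu}\delta_{\alpha\beta}+\delta_{\mu\beta}\delta_{\alpha\nu})$. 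Its trace is $N$, so each $S_A$ has $\sum_{\mu,\alpha}|s^A_{\bar\mu\bar\alpha}|^2=1$. Lemma \ref{6bound} uses the same convention.

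With this, your chain ends at $16(\max_\alpha\rho_\alpha^2)\,|B_{\alpha\bar\beta\gamma\bar\delta}|^2$. For $S=Z_1\otimes Z_1$ this equals $16|S|^2|B_{\alpha\bar\beta\gamma\bar\delta}|^2$. The loss comes from two places:
\begin{itemize}
\item You keep the terms with $\alpha=\beta$ inside $X_\alpha$, although they vanish identically in \eqref{NormST}.
\item The step $\sum_\alpha\rho_\alpha^2X_\alpha\le(\max_\alpha\rho_\alpha^2)\sum_\alpha X_\alpha$ forgets that every entry with $\alpha\neq\beta$ should be weighted by both $\rho_\alpha^2$ and $\rho_\beta^2$.
\end{itemize}
Your fallback ideas do not repair this. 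Tracelessness of $B$ plays no role, since the lemma holds for any algebraic K\"ahler curvature tensor. How you split the cross term is immaterial after summation, because $\sum_{\gamma,\delta}|B_{\alpha\bar\beta\gamma\bar\delta}|^2$ is symmetric in $\alpha,\beta$. Any version that finishes with the max-eigenvalue bound again ends at $16$.

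The missing step is this pairing. Drop the terms with $\alpha=\beta$. Then, after your first inequality, rewrite the second sum using $|B_{\beta\bar\alpha\gamma\bar\delta}|=|B_{\alpha\bar\beta\delta\bar\gamma}|$ and the relabelling $\gamma\leftrightarrow\delta$, instead of $\alpha\leftrightarrow\beta$. This gives
\begin{align*}
|SB|^2\le 8\sum_{\alpha\neq\beta}\sum_{\gamma,\delta}(\rho_\alpha^2+\rho_\beta^2)\,|B_{\alpha\bar\beta\gamma\bar\delta}|^2\le 8|S|^2\,|B_{\alpha\bar\beta\gamma\bar\delta}|^2,
\end{align*}
since $\rho_\alpha^2+\rho_\beta^2\le\sum_\mu\rho_\mu^2=|S|^2$ whenever $\alpha\neq\beta$. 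This is the paper's proof. The paper obtains the same intermediate bound from the Cauchy--Schwarz inequality $|ax-by|^2\le(a^2+b^2)(|x|^2+|y|^2)$, applied to the terms with $\alpha\neq\beta$.
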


\begin{proof}
Using the Cauchy-Schwarz inequality
$$|ax-by|^2 \leq (a^2+b^2)(|x|^2+|y|^2),$$
we get from \eqref{NormST}
\begin{align*}
 |S B|^2
 =& \ 4 \sum_{\alpha \neq \beta} \left| \rho_\alpha B_{\alpha \bar \beta \gamma\bar\delta} - \rho_\beta B_{\beta \bar \alpha \gamma\bar\delta} \right|^2 \\
 \leq & \ 4\sum_{\alpha \neq \beta}  (\rho_\alpha^2 +\rho_\beta^2) (|B_{\alpha \bar \beta \gamma\bar\delta}|^2 + |B_{\beta \bar \alpha \gamma\bar\delta}|^2) \\
 \leq & \ 8  |S|^2 |B_{\alpha \bar \beta \gamma\bar\delta}|^2, 
\end{align*}
noting for the first line that the term in the sum vanishes identically for $\alpha = \beta$.
\end{proof}

If there is to be any improvement over this obvious bound, one has to hope that some cancellation happens based on the specific form of the action. In fact, with some effort, it is possible to prove an (asymptotically) sharp estimate for the action term:

\begin{lemma}\label{6bound}
    $|SB|^2\le 6 |S|^2|B_{\alpha \bar \beta \gamma \bar \delta}|^2$.
\end{lemma}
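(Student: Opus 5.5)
The plan is to reduce to the diagonal form \eqref{NormST} and then analyze the resulting quadratic form in the eigenvalues $\rho_\alpha$, using the Kähler symmetries of $B$ and, crucially, its tracelessness $B_{\alpha\bar\beta\mu\bar\mu}=0$ to gain over Lemma \ref{prop:8bound}.

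\textbf{Step 1: reduction to a quadratic form.} By Takagi factorization, any $S\in S^{2,0}$ can be written in a unitary frame as $S=\sum_\alpha \rho_\alpha Z_\alpha\otimes Z_\alpha$ with $\rho_\alpha\ge 0$. Both sides of the inequality are invariant under this change of frame, so \eqref{NormST} applies, and $|S|^2=\sum_\alpha\rho_\alpha^2$ up to the fixed normalization constant used in Lemma \ref{prop:8bound}. Expanding the square in \eqref{NormST} and using the symmetries to identify the two diagonal contributions gives
\begin{align*}
\tfrac14|SB|^2 = 2\sum_\alpha \rho_\alpha^2 f_\alpha \;-\; 2\,\mathrm{Re}\sum_{\alpha,\beta}\rho_\alpha\rho_\beta\, g_{\alpha\beta},
\end{align*}
where
\begin{align*}
f_\alpha=\sum_{\beta,\gamma,\delta}|B_{\alpha\bar\beta\gamma\bar\delta}|^2, \qquad g_{\alpha\beta}=\sum_{\gamma,\delta}B_{\alpha\bar\beta\gamma\bar\delta}\,\overline{B_{\beta\bar\alpha\gamma\bar\delta}} .
\end{align*}
The conjugation rule $\overline{B_{\beta\bar\alpha\gamma\bar\delta}}=B_{\alpha\bar\beta\delta\bar\gamma}$ shows that $g_{\alpha\beta}=\tr(M_{\alpha\beta}M_{\alpha\beta}^{T})$, where $M_{\alpha\beta}=(B_{\alpha\bar\beta\gamma\bar\delta})_{\gamma\delta}$, so $g_{\alpha\beta}$ is real. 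Moreover $g_{\alpha\alpha}=f'_\alpha:=\sum_{\gamma,\delta}|B_{\alpha\bar\alpha\gamma\bar\delta}|^2$. Hence $\tfrac14|SB|^2=\rho^{T}Q\rho$ for the real symmetric matrix
\begin{align*}
Q_{\alpha\alpha}=2(f_\alpha-f'_\alpha), \qquad Q_{\alpha\beta}=-g_{\alpha\beta}\ (\alpha\neq\beta),
\end{align*}
and the claim is equivalent to $\rho^{T}Q\rho\le \tfrac32|B_{\alpha\bar\beta\gamma\bar\delta}|^2\,|\rho|^2$.

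\textbf{Step 2: bounding the diagonal using tracelessness and symmetry.} Note that $f_\alpha-f'_\alpha=\sum_{\beta\ne\alpha}\sum_{\gamma,\delta}|B_{\alpha\bar\beta\gamma\bar\delta}|^2$. By the symmetry $B_{\alpha\bar\beta\gamma\bar\delta}=B_{\gamma\bar\beta\alpha\bar\delta}$ (and its conjugate analogue), each component with index $\alpha$ in an unbarred slot reappears with $\alpha$ in the other unbarred slot. I will split these components by the number of indices equal to $\alpha$. Tracelessness, $\sum_\mu B_{\alpha\bar\alpha\mu\bar\mu}=0$ and its variants, expresses the highly diagonal entries such as $B_{\alpha\bar\alpha\alpha\bar\alpha}$ and $B_{\alpha\bar\alpha\alpha\bar\delta}$ through off-diagonal ones; Cauchy–Schwarz then controls their squares. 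The goal is an estimate $f_\alpha-f'_\alpha\le \tfrac12|B_{\alpha\bar\beta\gamma\bar\delta}|^2+(\text{terms absorbed in Step 3})$. The point is that the "one $\alpha$" components are shared with other indices $\beta$, whereas in Lemma \ref{prop:8bound} they were effectively counted twice.

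\textbf{Step 3: the off-diagonal part, which is the main obstacle.} The difficulty is that the cross terms $g_{\alpha\beta}$ need not have a sign, so they cannot simply be discarded. Lemma \ref{prop:8bound} used the crude bound $|g_{\alpha\beta}|\le \tfrac12(\ldots)$, which loses exactly the factor $8/6$. I would first try a Schur/Gershgorin-type test with the weight vector $\rho$ itself. That is, I would bound $\lambda_{\max}(Q)\le\max_\alpha\bigl(Q_{\alpha\alpha}+\sum_{\beta\ne\alpha}|g_{\alpha\beta}|\bigr)$, estimating $|g_{\alpha\beta}|\le\sum_{\gamma,\delta}|B_{\alpha\bar\beta\gamma\bar\delta}|^2$ (valid since $|\tr(MM^T)|\le\|M\|^2$), and then recount the resulting components against $|B_{\alpha\bar\beta\gamma\bar\delta}|^2$ using the multiplicities from Step 2.

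If the Gershgorin bound is too lossy, I would instead pair each cross term $g_{\alpha\beta}$ with the diagonal contributions $|B_{\alpha\bar\beta\gamma\bar\delta}|^2+|B_{\beta\bar\alpha\gamma\bar\delta}|^2$, writing the sum as $\sum_{\alpha<\beta}\|\rho_\alpha M_{\alpha\beta}-\rho_\beta M_{\alpha\beta}^{T}\|^2$. I would then use that $M_{\alpha\beta}$ has a large symmetric part, again forced by the Kähler symmetries and tracelessness, since the antisymmetric part of $M_{\alpha\beta}$ is what makes the pairing expensive. Sharpness should be checked on the quadric Bochner tensor, where the asymptotic constant $6$ ought to be approached; this indicates which of these estimates must be tight.
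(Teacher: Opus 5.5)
\textbf{There is a genuine gap.} Your Step 3 does not deliver the constant $6$. It only appears to because of an arithmetic slip in Step 1.

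Since $\overline{B_{\beta\bar\alpha\gamma\bar\delta}}=B_{\alpha\bar\beta\delta\bar\gamma}$, you have $g_{\alpha\beta}=\sum_{\gamma,\delta}B_{\alpha\bar\beta\gamma\bar\delta}B_{\alpha\bar\beta\delta\bar\gamma}=\tr(M_{\alpha\beta}^2)$, not $\tr(M_{\alpha\beta}M_{\alpha\beta}^T)$. This is not real in general; only $g_{\beta\alpha}=\overline{g_{\alpha\beta}}$ holds. More importantly, the cross term $-2\operatorname{Re}\sum_{\alpha,\beta}$ runs over ordered pairs. So the symmetric matrix must have $Q_{\alpha\beta}=-2\operatorname{Re} g_{\alpha\beta}$ for $\alpha\neq\beta$, not $-g_{\alpha\beta}$. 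As a check, take the tensor of Remark \ref{rem:sharpness} with $n=2$, $T_{1\bar 21\bar 2}=-2b$, $T_{1\bar 12\bar 2}=b$ and $\rho=(1,1)$. A direct computation gives $\tfrac14|ST|^2=36b^2$, whereas your $\rho^TQ\rho$ equals $28b^2$.

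With the correct $Q$, your Gershgorin row sums are $2(f_\alpha-f'_\alpha)+2\sum_{\beta\neq\alpha}|g_{\alpha\beta}|\le 4(f_\alpha-f'_\alpha)$. You also have $f_\alpha-f'_\alpha\le\tfrac12|B_{\alpha\bar\beta\gamma\bar\delta}|^2$, because the modulus-preserving map $(\alpha,\beta,\gamma,\delta)\mapsto(\beta,\alpha,\delta,\gamma)$ sends the index set of $f_\alpha-f'_\alpha$ into its complement; no tracelessness is needed. Together these recover exactly the constant $8$ of Lemma \ref{prop:8bound}.

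The loss is not just in the last step. On the example above, even with the exact value $f_\alpha-f'_\alpha=5b^2$, the bound $|g_{12}|\le\|M_{12}\|^2=5b^2$ gives row sums $20b^2$, while $\tfrac32|T_{\alpha\bar\beta\gamma\bar\delta}|^2=18b^2$ is actually attained. Your fallback is only sketched. Note also that the partner of $M_{\alpha\beta}$ is $M_{\alpha\beta}^*$, not $M_{\alpha\beta}^T$, so the relevant split is Hermitian versus skew-Hermitian. As written, no step of the proposal beats $8$.

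\textbf{What is missing.} You need a bookkeeping that charges the cross terms against the correct multiplicities of the coefficients in $|B|^2$. Tracelessness plays no role: the lemma holds for every algebraic K\"ahler curvature tensor, and the extremal configuration above is Einstein with nonzero Ricci, not traceless.

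The paper groups the summands $|\rho_\alpha B_{\alpha\bar\beta\gamma\bar\delta}-\rho_\beta B_{\beta\bar\alpha\gamma\bar\delta}|^2$ by the multiset $I=\{\alpha,\beta,\gamma,\delta\}$. Distinct $I$ involve disjoint sets of coefficients and $|B_{\alpha\bar\beta\gamma\bar\delta}|^2=\sum_I|B_I|^2$. Hence it suffices to prove the bound blockwise, with $|S|^2$ on the right-hand side.

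In the critical block $I=\{\alpha,\alpha,\beta,\beta\}$, set $x=B_{\alpha\bar\alpha\beta\bar\beta}$, which is real and occurs four times in $|B_I|^2$. Set $y=B_{\alpha\bar\beta\alpha\bar\beta}$, which occurs twice, as $y$ and $\bar y$. The block equals $8\bigl(|\rho_\alpha y-\rho_\beta x|^2+|\rho_\alpha x-\rho_\beta\bar y|^2\bigr)$. Cauchy--Schwarz together with the weighted AM--GM inequality $2|x||y|\le 2|x|^2+\tfrac12|y|^2$ then gives exactly $6(\rho_\alpha^2+\rho_\beta^2)|B_I|^2$. The block $\{\alpha,\alpha,\beta,\gamma\}$ is handled by a similar weighted Cauchy--Schwarz, and the remaining blocks give at most $4$.

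This $4{:}2$ multiplicity mismatch between $x$ and $y$ is precisely what your separate estimates of $f_\alpha-f'_\alpha$ and $|g_{\alpha\beta}|$ discard. Finally, the quadric is not a useful sharpness test: there $|SB|^2/(|S|^2|B_{\alpha\bar\beta\gamma\bar\delta}|^2)=O(1/n)$. The near-extremal examples come from Remark \ref{rem:sharpness}.
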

\begin{proof}
Fix $S$. In a basis diagonalizing $S$, we have \begin{align*}
    |SB|^2=4\sum\limits_{\alpha \beta \gamma \delta}|\rho_\alpha B_{\alpha \bar \beta \gamma \bar \delta}-\rho_\beta B_{\beta \bar \alpha \gamma \bar \delta}|^2=4 \sum\limits_I \sum\limits_{\{\alpha, \beta, \gamma, \delta\}=I}|\rho_\alpha B_{\alpha \bar \beta \gamma \bar \delta}-\rho_\beta B_{\beta \bar \alpha \gamma \bar \delta}|^2,
\end{align*}
where $I$ runs over all multisets of indices of size $4$. Since different summands do not see the same coefficients of curvature, the overall ratio $|SB|/|B|$ is bounded above by the maximum ratio for each summand. This leaves us with 5 cases to consider: $I=\{\alpha,\alpha,\alpha,\alpha\},\{\alpha,\alpha,\alpha,\beta\},\{\alpha,\alpha,\beta, \beta\},\{\alpha,\alpha,\beta, \gamma\},\{\alpha,\beta, \gamma, \delta\},$ where differently named variables are presumed to be different. 

Throughout we write $|B_I|^2$ to be the sum over $|B_{\alpha \bar \beta \gamma \bar \delta}|^2$ with indices according to $I$, and conjugations only in the fixed positions. As the index set is fixed in advance, terms without explicit sums are to be read without Einstein convention in mind.\vspace{2mm}

$I=\{\alpha,\alpha,\alpha,\alpha\}$: \\
Here $\sum\limits_{\{\alpha, \beta, \gamma, \delta\}=I}|\rho_\alpha B_{\alpha \bar \beta \gamma \bar \delta}-\rho_\beta B_{\beta \bar \alpha \gamma \bar \delta}|^2$ only has one term, which is zero. \vspace{2mm}

$I=\{\alpha,\alpha,\alpha,\beta\}$: \\
Here 
\begin{align*}
    4\sum\limits_{\{\alpha, \beta, \gamma, \delta\}=I} & |\rho_\alpha B_{\alpha\bar \beta \gamma \bar \delta}-\rho_\beta B_{\beta \bar \alpha \gamma \bar \delta}|^2  \\
     = & \ 8 |\rho_\beta B_{\beta \bar \alpha \alpha \bar \alpha}-\rho_\alpha B_{\alpha \bar \beta \alpha \bar \alpha}|^2 \\ 
     \leq & \  \ 8 (\rho_\beta + \rho_\alpha)^2|B_{\alpha \bar \beta \alpha \bar \alpha}|^2 \\
     \leq & \  4 |S|^2 |B_I|^2.
\end{align*}

$I=\{\alpha,\alpha,\beta, \beta\}$: \\
Here \begin{align*}
        4\sum\limits_{\{\alpha, \beta, \gamma, \delta\}=I} & |\rho_\alpha B_{\alpha \bar \beta \gamma \bar \delta}-\rho_\beta B_{\beta \bar \alpha \gamma \bar \delta}|^2\\
        =& \ 8 (|\rho_\alpha B_{\alpha \bar \beta \alpha \bar \beta}-\rho_\beta B_{\beta \bar \alpha \alpha \bar \beta}|^2+|\rho_\alpha B_{\alpha \bar \beta \beta \bar \alpha}-\rho_\beta B_{\beta \bar \alpha \beta \bar \alpha}|^2)\\
        \leq & \ 8(\rho_\alpha^2+\rho_\beta^2)(3|B_{\alpha \bar \alpha \beta \bar \beta}|^2+\frac{3}{2}|B_{\alpha \bar \beta \alpha \bar \beta}|^2)\\
        \leq & \ 6|S|^2(4|B_{\alpha \bar \alpha \beta \bar \beta}|^2+2|B_{\alpha \bar \beta \alpha  \bar \beta}|^2)\\
        =& \ 6|S|^2|B_I|^2,
\end{align*}
where in the third line we used that for any real $a,b$ and complex $x,y$ one has by Cauchy-Schwarz that 
\begin{align*}
    |ax-by|^2+|a\bar y-b\bar x|^2&\le(a^2+b^2)(|x|^2+|y|^2)+4|a||b||x||y|\\
    &\le (a^2+b^2)(|x|^2+|y|^2+2|x|^2+1/2|y|^2)\\
    &=(a^2+b^2)(3|x|^2+\frac{3}{2}|y|^2).
\end{align*}

$I=\{\alpha,\alpha,\beta, \gamma\}$: \\
Here \begin{align*}
    4 & \sum \limits_{\{\alpha, \beta, \gamma, \delta\}=I}  |\rho_\alpha B_{\alpha\bar \beta \gamma \bar \delta}-\rho_\beta B_{\beta \bar \alpha \gamma \bar \delta}|^2\\
     = & \ 8 (|\rho_\alpha B_{\alpha \bar \beta \alpha \bar \gamma}-\rho_\beta B_{\beta \bar \alpha \alpha \bar \gamma}|^2+|\rho_\alpha B_{\alpha \bar \beta \gamma \bar \alpha}-\rho_\beta B_{\beta \bar \alpha \gamma \bar \alpha}|^2+|\rho_\alpha B_{\alpha \bar \gamma \alpha \bar \beta}-\rho_\gamma B_{\gamma \bar \alpha \alpha \bar \beta}|^2\\
    & \hspace{4mm} +|\rho_\alpha B_{\alpha \bar \gamma \beta \bar \alpha}-\rho_\gamma B_{\gamma \bar \alpha \beta \bar \alpha}|^2+|\rho_\beta B_{\beta \bar \gamma \alpha \bar \alpha}-\rho_\gamma B_{\gamma \bar \beta \alpha \bar \alpha}|^2)\\
    \leq & \ 8\big((\frac{3}{2}\rho_\alpha^2+3\rho_\beta^2)(|B_{\alpha \bar \beta \alpha \bar \gamma}|^2+|B_{\beta \bar \alpha \alpha \bar \gamma}|^2)+(\frac{3}{2}\rho_\alpha^2+3\rho_\gamma^2)(|B_{\alpha \bar \gamma \alpha \bar \beta}|^2+|B_{\gamma \bar \alpha \alpha \bar \beta}|^2)\\
    & \hspace{4mm} +(\rho_\beta+\rho_\gamma)^2|B_{\beta \bar \gamma \alpha \bar \alpha}|^2)\big)\\
    = & \ 24(\rho_\alpha^2+\rho_\beta^2+\rho_\gamma^2)(|B_{\alpha\bar \beta \alpha \bar \gamma}|^2+|B_{\beta \bar \alpha \alpha \bar \gamma}|^2)+8(\rho_\beta+\rho_\gamma)^2|B_{\beta \bar \gamma \alpha \bar \alpha}|^2\\
    \leq & \ 6|S|^2(4|B_{\alpha \bar \beta \alpha \bar \gamma}|^2+8|B_{\beta \bar \alpha \alpha \bar \gamma}|^2)\\
    = & \ 6|S|^2|B_I|^2,
\end{align*}
where once more we used Cauchy-Schwarz to see
\begin{align*}
    |ax-by|^2+|a\bar y-b\bar x|^2 
    \le& (a^2+b^2)(|x|^2+|y|^2)+4|a||b||x||y|\\
    \le& (a^2+b^2)(|x|^2+|y|^2)+ 2|a||b|(|x|^2+|y|^2)\\
    \le&(|x|^2+|y|^2)(a^2+b^2+\frac{1}{2}a^2+2b^2)\\
    =&(\frac{3}{2}a^2+3b^2)(|x|^2+|y|^2)
\end{align*}
for arbitrary real $a,b$ and complex $x,y$.

Note here that the estimate $24+8\le 6\cdot 8$ is not sharp, and using the same method one can obtain a somewhat better constant in this case. \vspace{2mm}

$I=\{\alpha,\beta, \gamma, \delta\}$:\\
Here \begin{align*}
    4\sum\limits_{\{\alpha, \beta, \gamma, \delta\}=I}  &|\rho_\alpha B_{\alpha\bar \beta \gamma \bar \delta}-\rho_\beta B_{\beta \bar \alpha \gamma \bar \delta}|^2\\
    \leq & \ 4\sum\limits_{\{\alpha,\beta,\gamma,\delta\}=I}(\rho_\alpha^2+\rho_\beta^2)(|B_{\alpha\bar \beta \gamma \bar \delta}|^2+|B_{\beta \bar \alpha \gamma \bar \delta}|^2)\\
    = & \ 2\sum\limits_{\{\alpha,\beta,\gamma,\delta\}=I}(\rho_\alpha^2+\rho_\beta^2+\rho_\gamma^2+\rho_\delta^2)(|B_{\alpha \bar \beta \gamma \bar \delta}|^2+|B_{\beta \bar \alpha \gamma \bar \delta}|^2)\\
    \leq & \ 4|S|^2|B_I|^2.
\end{align*}

Thus we see $|SB|^2\le 6|S|^2|B_{\alpha \bar \beta \gamma \bar \delta}|^2$ in all cases.
\end{proof}

\begin{remark}\label{rem:sharpness}
    The estimate is sharp on algebraic K\"ahler curvature operators not subject to any additional conditions. Define $T$ so that $T_{1\bar 2 1 \bar 2}=\pm2T_{1\bar 1 2 \bar 2}\ne 0$ are real and such that all coefficients not determined by these vanish, and take $S=Z_1\otimes Z_1+Z_2\otimes Z_2$.

    To note that it is possible to choose $T$ like that, note that algebraic curvature operators correspond to self-adjoint operators on $S^{2,0}$ via the Calabi operator, and that $T$ is induced by $$C=\begin{pmatrix}
        0 & a & 0 \\ a & 0 & 0 \\ 0 & 0 & b
    \end{pmatrix}$$ in the basis $Z_1\odot Z_1, Z_2\odot Z_2, Z_1 \odot Z_2$ for suitable $a,b>0$. 

    This algebraic example is Einstein and $S$ is an eigenvector of $C$, so neither of those conditions can give general improvement. A natural extension to an algebraic Bochner tensor in dimension $n$ gives examples with ratio less than, but asymptotic to, the sharp bound for large $n$.
\end{remark}

\begin{remark}
    The proofs of Lemmas \ref{prop:8bound} and \ref{6bound} do not make use of the fact that $B$ is a Bochner tensor. In fact, the statements hold for any algebraic K\"ahler curvature tensor without modification. It is possible to use that $B$ is Bochner to obtain slightly improved bounds in some cases in the proof of Lemma \ref{6bound}, including in the case that gives the $6$, but examples show that no better dimension-independent bound is possible.
\end{remark}

\begin{corollary}
    Theorem \ref{maintheorem} is true.
\end{corollary}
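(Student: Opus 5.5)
We deduce Theorem \ref{maintheorem} from Lemma \ref{maximizationproblem} together with Lemma \ref{6bound}, and then run the Bochner argument with Theorem \ref{prop:mainbochner}.

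\textbf{Step 1: bounding the constant $c$.} In the weight principle as used in Lemma \ref{maximizationproblem}, the weights are only evaluated on the unitary eigenbasis $\{S_A\}$ of $C$. So it suffices to bound $|S_AB|^2+4\tau_A^2$ for unit eigenvectors $S_A$. Recall $|B|^2=4|B_{\alpha\bar\beta\gamma\bar\delta}|^2$.
\begin{itemize}
\item Lemma \ref{6bound} gives $|S_AB|^2\le 6|B_{\alpha\bar\beta\gamma\bar\delta}|^2=\tfrac32|B|^2$.
\item Trivially $\tau_A^2\le\sum_B\tau_B^2=\tfrac14|B|^2$, so $4\tau_A^2\le|B|^2$.
\end{itemize}
Hence the highest weight satisfies $\omega\le\tfrac52|B|^2$. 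Since the total weight is $\Omega=(n+1)|B|^2$, we get $\Omega/\omega\ge\tfrac{2(n+1)}{5}$.

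$k$-nonnegativity implies $k'$-nonnegativity for all $k'\ge k$. Therefore the hypothesis that $C-\frac{\lambda}{n+1}\id$ is $\frac{2(n+1)}{5}$-nonnegative, fed into the weight principle, yields $\Ric_L(R,\bar R)\ge0$ pointwise. (If $B=0$ at a point, all weights vanish and $\Ric_L=0$ there.)

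\textbf{Step 2: Bochner argument.} On an Einstein manifold $\Ric(R)=\lambda g$ is parallel, so the divergence term in Proposition \ref{General_Bochner_Formula} vanishes. This gives
\[
\Delta|R_{\alpha\bar\beta\gamma\bar\delta}|^2=|\nabla_\mu R_{\alpha\bar\beta\gamma\bar\delta}|^2+|\nabla_{\bar\mu}R_{\alpha\bar\beta\gamma\bar\delta}|^2+\tfrac18\Ric_L(R,\bar R)\ge0 .
\]
Integrating over the closed manifold $M$ forces $\nabla R=0$ and $\Ric_L(R,\bar R)\equiv0$. So $(M,g)$ is a locally Hermitian symmetric space.
\begin{itemize}
\item If $\lambda=0$: a Ricci-flat locally symmetric space is flat.
\item If $\lambda>0$: by Myers' theorem the universal cover is compact. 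It is therefore a product of irreducible compact Hermitian symmetric spaces. Since Fano Kähler manifolds are simply connected, $M$ equals this cover.
\end{itemize}

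\textbf{Step 3: excluding everything but $\CP^n$.} This is the main obstacle. I would first try to exploit the equality case $\Ric_L=0$ directly. If $B\ne0$, equality in the weight principle together with the two inequalities of Step 1 should force a degenerate configuration, namely $|S_AB|^2=6|B_{\alpha\bar\beta\gamma\bar\delta}|^2$ and $\tau_A^2=\sum_B\tau_B^2$ simultaneously. The latter makes $C_B$ have rank one, and I would then check that this is incompatible with the sharpness configuration of Remark \ref{rem:sharpness}.

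Failing that, I would verify the hypothesis on the explicit spectra from the Examples.
\begin{itemize}
\item Reducible case $\CP^p\times\CP^q$ (normalized $\lambda=1$): the eigenvalue $-\frac1{n+1}$ of $C-\frac{\lambda}{n+1}\id$ occurs with multiplicity $pq\ge n-1\ge\frac{2(n+1)}5$ when $n\ge3$, which violates the hypothesis. For $n=2$ one computes $-\frac13+\frac15\cdot\frac23<0$. General products behave similarly, because every mixed factor $V\odot W$ gives eigenvalue $0$ for $C$.
\item Quadric $Q^n$: the eigenvalue $2-n-\frac{n}{n+1}<0$ has large magnitude. One checks that the $\frac{2(n+1)}5$-sum is negative.
\item Remaining irreducible spaces (Grassmannians, the spaces of types $\mathrm{SO}(2m)/\mathrm U(m)$ and $\mathrm{Sp}(m)/\mathrm U(m)$, $E_6$, $E_7$): handle them via known Calabi spectra, each of which has a sizeable non-positive eigenspace.
\end{itemize}
Thus only $B=0$ remains. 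Then $M$ has constant holomorphic sectional curvature with $\lambda>0$ and is simply connected, so it is $\CP^n$ with the Fubini–Study metric.
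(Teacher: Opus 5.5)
Your proposal is correct and follows the paper's proof. You bound $c\le\tfrac52$ via Lemma \ref{6bound} and $4\tau_A^2\le|B|^2$, apply the weight principle, use the Bochner formula to force $\nabla R=0$, and then exclude the other Hermitian symmetric spaces through their Calabi spectra. Your explicit checks for products and quadrics are right, and your claim that the remaining non-quadric irreducible spaces have a negative $C$-eigenspace of dimension at least $\tfrac{2(n+1)}{5}$ does hold; the paper itself only cites Calabi--Vesentini's tables here.

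Your first idea in Step 3 is unnecessary and not justified as written. $\Ric_L=0$ does not by itself force the highest weight to equal $\tfrac52|B|^2$, since the relevant partial eigenvalue sum can still vanish. Note, however, that $\tau_A^2=\sum_B\tau_B^2$ together with $\sum_B\tau_B=0$ already gives $B=0$, so no comparison with Remark \ref{rem:sharpness} would be needed.
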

\begin{proof}
    By Lemma \ref{6bound} and the observation that
    \begin{align*}
        |B(S)|^2\le |C_B|_{op}^2|S|^2 = |B_{\alpha \bar \beta \gamma \bar \delta}|^2 |S|^2
    \end{align*}
    one obtains that the constant $c$ from Lemma \ref{maximizationproblem} satisfies $c\le \frac{6+4}{4}$, where one has to keep in mind that $|B|^2=4|B_{\alpha \bar \beta \gamma \bar \delta}|^2$.
    Now Proposition \ref{maximizationproblem} gives us $\Ric_L(R, \bar R)\ge 0$, which implies $\nabla R=0$ globally via maximum principle. To check that no non-flat symmetric space except $\CP^n$ satisfies the assumptions is a quick consultation of Calabi-Vesentini's tables \cite{CV}.
\end{proof}

One may remark that the consultation of curvature tables can be avoided: After observing that strict inequality holds in Lemma \ref{6bound} for nonzero $|B|^2$, one gets strict inequality in the Bochner formula unless $B=0$. However, strict inequality in the Bochner formula is impossible by maximum principle.

\section{Examples}

Our proof of Theorem \ref{maintheorem} rests heavily on an estimate for the quantity $|SB|^2+4|B(S)|^2$ for eigenvectors $S$ of the Bochner tensor. While the estimates we give are (asymptotically) sharp for each term separately, it remains an interesting question as to whether something better holds true for the sum. In this section, we compute this quantity for some of the spaces that are natural guesses for realizing the worst case, and, based on the examples, conjecture some estimates. The examples we consider are projective spaces, their products, and complex quadrics, each with their symmetric space metric.

A table of some of the relevant quantities for each of the examples we consider is given at the end of the article as Table \ref{tab:examplevalues}.

\subsection{Complex projective space} On $\mathbb{CP}^n$ scaled to constant holomorphic curvature $\kappa$, i.e., sectional curvatures in $[\kappa/4, \kappa]$, we have $C=\kappa \id_{S^{2,0}}$, so the Bochner tensor vanishes and all related quantities are trivial.

\subsection{Products of projective spaces} The next simplest examples are products of projective spaces, $M=\mathbb{CP}^{d_1}\times \ldots \times \mathbb{CP}^{d_k}$, with their metrics scaled so that the product is Einstein. The Calabi operator splits according to $S^{2,0}M=\bigoplus S^{2,0}\mathbb{CP}^{d_i} \oplus \bigoplus_{i\ne j} T^{1,0}\mathbb{CP}^{d_i} \odot T^{1,0}\mathbb{CP}^{d_j}$ with the mixed terms constituting the kernel. 

To avoid excessive calculations, we consider the case of two factors first: 

Here we get ratios  $ \frac{|SB|^2+4|B(S)|^2}{|S|^2|B|^2}$ of $ \frac{d_j +1}{d_i (n+2)}$ and $(\frac{n+1}{d_1 d_2}-\frac{1}{n+2}\frac{(d_1+1)(d_2+1)}{d_1 d_2})$ respectively on the different eigenspaces of $B.$

In the first case, this constant is always strictly less than $1$. The constant in the second case is strictly less than $1$ for $d_1, d_2>1$. For $d_1=1$ we get $1+\frac{4}{(n-1)(n+2)}$. We will see that this ties with the other worst-case contender in the case $d_1=d_2=1$, but that in other dimensions the worst case is not attained on products of two complex projective spaces.

Since we obtain the worst constant for a product of two $\CP^1$s, another natural candidate is the product $\CP^1 \times \ldots \times \CP^1$, so we compute this as well. This also remedies the fact that actually here the two eigenvalues are equal on $S^{2,0}\CP^{d_i}$, so the eigenspaces are bigger than we computed above. Here one has the eigenspaces $\bigoplus S^{2,0}\CP^1_i$ and $\bigoplus T^{1,0}\CP^1_i\odot T^{1,0}\CP^1_j$, and the ratios are $\frac{1}{n}$ and $\frac{2}{n-1},$ respectively. 

\subsection{Complex quadrics}
On the complex quadric with its symmetric metric, the Calabi operator is given by $\frac{2}{n}\lambda \id-\lambda q\otimes q^*$, where $q$ is a symmetric $2$-tensor that diagonalizes to $q=\frac{1}{\sqrt{n}}\sum Z_\alpha\otimes Z_\alpha$.

As a straightforward consequence, $B=\tau_1 \id-\frac{n(n+1)}{2}\tau_1 q\otimes q^*$, where $\tau_1$ is determined by the scaling, and we get eigenspaces $\langle q \rangle$ and $\langle q \rangle ^\perp$.

Consulting Table \ref{tab:examplevalues}, we see that in this example $|SB|^2+4|B(S)|^2$ is maximized on the special tensor $q$ with a value of $\frac{(n+2)}{n}|B|^2|S|^2$, with the complement achieving a smaller constant. Noting that the smallest case is $n=2$, we get $|SB|^2+4|B(S)|^2\le 2|S|^2|B|^2$ on the complex quadrics.

\begin{remark}
    Computing the half-trace operator $C-\frac{\lambda}{n+1}id$, one checks that it is $k$-nonnegative exactly for $k\ge\frac{n(n+1)}{n+2}$. This is exactly the value that Lemma \ref{maximizationproblem} predicts subject to the condition $|SB|^2+4|B(S)|^2\le \frac{(n+2)}{n}|B|^2|S|^2$.
\end{remark}

\begin{conjecture}\label{optimalityconjecture}
    If $B$ is an algebraic Bochner operator, then $|SB|^2+4|B(S)|^2 \leq 2 |B|^2|S|^2$
    for any eigenvector $S$ of $B$, viewed as an operator on $S^{2,0}$.
\end{conjecture}
No better constant can asymptotically exist based just on algebraic curvature tensor computations, given the examples based on remark \ref{rem:sharpness}.

The worst case we know for actual manifolds is $\frac{n+2}{n}$. Note that the constant we obtained for products of $\CP^n$ is strictly worse with the exception of $\CP^1\times \CP^1$, where the constants are equal. If this bound could be proven, Lemma \ref{maximizationproblem} would imply

\begin{conjecture}
    Let $M$ be K\"ahler-Einstein, with Calabi operator $C$ such that $C-\frac{\lambda}{n+1}$ is $\frac{n(n+1)}{n+2}$-positive. Then $M$ is isometric to $\CP^n$, up to scaling.

    If $C-\frac{\lambda}{n+1}$ is only $\frac{n(n+1)}{n+2}$-nonnegative, then $M$ is flat, isometric to $\CP^n$, or isometric to a complex quadric.
\end{conjecture}

\begin{table}[!h]
    \centering
    \begin{tabular}{|c|c|c|c|}
        \hline
         $\CP^{d_1}\times \CP^{d_2}$ & & $\lambda=(n+1)/2$ & $|B|^2=2\frac{(n+1)(n+2)d_1 d_2}{(d_1+1)(d_2+1)}$  \\
        \hline
        \hline
        Eigenspace & Eigenvalue & $|SB|^2$ & $(|SB|^2+4|B(S)|^2) /|B|^2$ \\
        \hline
        $S^{2,0}_i$ & $\frac{n-d_i}{d_i+1}$ & $2d_j+2(\frac{n-d_i}{d_i+1})^2  (d_i-1)$ &$ \frac{d_j +1}{d_i (n+2)}$  \\
        \hline
        $T^{1,0}_1\odot T^{1,0}_2$ & $-1$ & $2(\frac{(n+1)^2(n+2)}{(d_1+1)(d_2+1)}-n-3)$ & $\frac{(n+1)(n+2)-(d_1+1)(d_2+1)}{d_1 d_2(n+2)}$  \\
        \hline
        \hline
        \hline
         $\CP^{1}\times \ldots \times \CP^{1}$ & & $\lambda=(n+1)/2$ & $|B|^2=n(n-1)(n+1)$  \\
        \hline
        \hline
        Eigenspace & Eigenvalue & $|SB|^2$ & $(|SB|^2+4|B(S)|^2) /|B|^2$  \\
        \hline
        $\bigoplus S^{2,0}_i$ & $\frac{n-1}{2}$ & $2(n-1)$ & $\frac{1}{n}$  \\
        \hline
        $\bigoplus T^{1,0}_i\odot T^{1,0}_j$ & $-1$ & $2(n-1)(n+2)$ & $\frac{2}{n-1}$  \\
        \hline\hline
        \hline
         Quadric & & $\lambda=(n+1)/2$ & $|B|^2=(n+1)(n-1)(n+2)/n$  \\
        \hline
        \hline
        Eigenspace & Eigenvalue & $|SB|^2$ & $(|SB|^2+4|B(S)|^2) /|B|^2$  \\
        \hline
        $\langle q\rangle$ & $-\frac{(n-1)(n+2)}{2n}$ & $2\frac{(n-1)(n+2)^2}{n^2}$ & $\frac{n+2}{n}$  \\
        \hline
        $\langle q \rangle^\perp$, $n\ge 2$ & $\frac{1}{n}$ & $2(n+1)-\frac{4n+8}{n^2}$ & $2\frac{n^2-2}{n(n-1)(n+2)}$  \\
        \hline
    \end{tabular}
    \caption{The Bochner tensor on examples}
    \label{tab:examplevalues}
\end{table}

\end{document}